\numberwithin{equation}{section}
\newcommand{\bbR}{\mathbb{R}}
\newcommand{\OOmega}{D}
\newcommand{\ep}{\varepsilon}
\newcommand{\sN}{\mathcal{N}}
\newcommand{\sC}{\mathsf{C}}
\newcommand{\GGamma}{\Sigma}
\newcommand{\one}{\mathds{1}}
\newcommand{\ue}{u^{\varepsilon}}
\newcommand{\be}{\begin{equation}}
\newcommand{\ee}{\end{equation}}
\newcommand{\beqas}{\begin{eqnarray*}}
\newcommand{\eeqas}{\end{eqnarray*}}
\newcommand{\eproof}{{\hfill$\Box$}}
\newcommand{\dee}{\mathrm{d}}
\def\IR{\mathbb{R}}
\newcommand{\dd}{ \,\mathrm{d}}
\newcommand{\p}{\partial}
\def\bbb[#1]{\boldsymbol{#1}}
\def\mmm[#1]{\mathsf{#1}}
\numberwithin{theorem}{section}
\newtheorem{remark}{Remark}
\newcommand{\TheTitle}{Reconciling Bayesian 
and perimeter regularization  for binary inversion} 
\newcommand{\TheAuthors}{O. R. A. Dunbar, M. M. Dunlop, C. M. Elliott, V. Ha Hoang, and A. M. Stuart}
\title{{\TheTitle}\thanks{Submitted to the editors DATE.
\funding{The research  of CME was partially supported by the Royal Society via a Wolfson Research Merit Award; the work of AMS by DARPA contract
 W911NF-15-2-0121; the 
work of ORAD, CME and AMS by the EPSRC programme grant EQUIP; the work of ORAD by the NSF grant AGS‐-1835860;
the work of MMD and AMS 
by AFOSR Grant FA9550-17-1-0185 and ONR Grant N00014-17-1-2079;
the work of MMD by the EPSRC MASDOC Graduate Training Program; 
VHH gratefully acknowledges the MOE AcRF Tier 1 grant RG30/16.}}}
\author{
  Oliver R. A. Dunbar\thanks{Geological \& Planetary Sciences, California Institute of Technology, USA, 91125 (\email{odunbar@caltech.edu}).}
  \and
  Matthew M. Dunlop\thanks{Courant Institute of Mathematical Sciences, New York University, USA, 10012 (\email{matt.dunlop@nyu.edu}).}
  \and
  Charles M. Elliott\thanks{Mathematics Institute, University of Warwick, UK, CV4 7AL (\email{c.m.elliott@warwick.ac.uk}).}   
  \and \newline
  Viet Ha Hoang\thanks{Division of Mathematical Sciences, School of Physical and Mathematical Sciences, 
  Nanyang Technological University, Singapore 637371 (\email{vhhoang@ntu.edu.sg}).}
  \and
  Andrew M. Stuart\thanks{Computing \& Mathematical Sciences, California Institute of Technology, USA, 91125 (\email{astuart@caltech.edu}).}
}
\begin{document}

\maketitle

% REQUIRED
\begin{abstract}
A central theme in classical algorithms for the reconstruction of 
discontinuous functions from observational data is perimeter 
regularization via the use of the total variation.
On the other hand, sparse or noisy data often demands a probabilistic 
approach to the reconstruction of images, to enable uncertainty quantification;
the Bayesian approach to inversion, which itself introduces
a form of regularization, is a natural framework in which
to carry this out. In this paper the link between Bayesian inversion methods
and perimeter regularization is explored. 
In this paper two links are studied: (i) the maximum a posteriori
(MAP) objective function of
a suitably chosen Bayesian phase-field approach is shown to be closely
related to a least squares plus perimeter regularization objective;
(ii) sample paths of a suitably chosen Bayesian level set formulation are 
shown to possess finite perimeter and to have the ability to
learn about the true perimeter. 

\end{abstract}

% REQUIRED
\begin{keywords}

Bayesian inversion,  perimeter regularization,
phase-field, level set method, 
Gamma convergence, uncertainty quantification.
\end{keywords}

% REQUIRED
\begin{AMS}
  35J35, 62G08, 62M40, 94A08.
\end{AMS}

\section{Introduction}

\subsection{Problem Statement}
Let $\OOmega$ be the unit cube $(0,1)^d\subset\IR^d, d=2,3$. 
Let $K:L^1\bigl(\OOmega\bigr)\to \IR^J$ be a bounded linear operator.
We consider the problem of recovering a binary-valued function 
$u \in BV_{\rm binary}(\OOmega)$ where 
$$BV_{\rm binary}(\OOmega)=\{\psi\in BV(\OOmega):\ \psi(x)\subset\{\pm 1\}, x\in \OOmega\}$$
from finite dimensional data $y\in \mathbb R^J$ satisfying
\begin{equation}
\label{eq:basiceq}
y=Ku +\ep^c\eta.
\end{equation}
Here the  finite number of observations are corrupted by noise $\ep^c\eta$ of size $\ep^c$ for which we assume that 
$\eta$ is a centred Gaussian
$\sN(0,\Sigma)$ with positive definite covariance  $\Sigma\in \mathbb R^{J\times J}$.  Here $\ep$ and $c$ denote constants, with $\ep \ll 1$ and $c>0$ (small noise)
or $c=0$ (noise on the order of the observations). Observe that from an application perspective the 
space $BV_{\rm binary}(\OOmega)$ is a natural model for binary images. The problem of determining $u \in BV_{\rm binary}(\OOmega)$
from $y$ given by \eqref{eq:basiceq}
thus constitutes a canonical binary  inverse problem for which
the issue is to recover the interface between the different domains in $\OOmega$ in which function $u$  takes its two values. 
For a given $u$ a measure of the discrepancy with the data is the following scaled misfit functional
\begin{equation}
J(u):=\frac{1}{2\ep^{2c}}\Bigl|\GGamma^{-\frac{1}{2}}(y-Ku)\Bigr|^2
\end{equation}
A typical deterministic approach to recover of $u$, based on this
misfit, would be to specify a model prior space $\mathcal P$  for $u$ 
and to regularize the misfit functional by addition of a 
functional $\mathcal R(u)$ defined on $\mathcal P$,
and then to seek a solution to the optimization problem
\begin{equation}
\inf_{u\in \mathcal P}\Big(J(u)+\mathcal R(u)\Big).
\end{equation}
An intuitive and common method of regularization for binary problems is to penalize the perimeter of the 
interface. In the case of $\mathcal P=BV_{\rm binary}(\OOmega)$
this leads to
\begin{equation}\label{perimreg}
\inf_{u\in BV_{\rm binary}(\OOmega)}\Big(J(u)+\sigma \int_\OOmega|\nabla u|\Big)
\end{equation}
where $\int_\OOmega|\nabla u|$ denotes the total variation of $u$ and $\sigma >0$ is a parameter. In this way, the minimization over $BV_{\rm binary}(\OOmega)$ identifies perimeter regularization with total variation \cite{rudin1992nonlinear} and the Mumford-Shah \cite{MumSha89} approaches, since these 
regularizations coincide on binary functions.

Often perimeter regularization is relaxed to a convex 
regularization  by allowing values of $u \in [-1,1]$. 
An alternative, non-convex, relaxation is to use a Cahn-Hilliard functional to approximate the perimeter functional. For example one might consider, 
for a small parameter $\tilde\ep>0$,
\begin{equation}
\inf_{W^{1,p}(\OOmega)}\Big(J(u)+\sigma_W \int_\OOmega \Big( \tilde \ep|\nabla u|^2+\frac{1}{\tilde \ep}W(u)\Big)dx\Big)
\end{equation}
where $W(\cdot)$ is a double well potential. The minimizers of this Cahn-Hilliard functional are known as phase-fields, and this relaxation is often referred to as a phase-field regularization. In appropriate circumstances this $\Gamma-$converges to (\ref{perimreg}) in the limit $\tilde\ep\to 0$.

However since the unknown observational noise $\eta$ has an assumed  Gaussian 
distribution, it is natural 
to take a probabilistic approach to the recovery of $u$ and 
model uncertainty about $u$, and hence the interface 
between different domains, through a probability distribution. 
This leads to Bayesian formulations of the problem in which a prior
probability distribution is specified on the unknown function,
and the likelihood of the data is used to compute a posterior probability
distribution on the unknown function, given the data. The prior probability
distribution imposes a prior space ${\mathcal Q}$ where, almost surely,
samples from the posterior distribution live; the mean or mode of the
posterior distribution will typically live in a smoother space ${\mathcal P}
\subset {\mathcal Q}$; this space ${\mathcal P}$ will be analogous to the
prior space ${\mathcal P}$ described above.

We adopt two approaches. In the first, the level set method, we reformulate the inverse problem 
as determining smooth functions $v$ whose zero level set defines the interface in the unknown
function $u$. Specifically the sign of $v$ defines $u$ and the prior probability
distribution on $v$ yields a space ${\mathcal Q}$ containing $C^1$ functions.
The pushforward measure on $u$, defined by the sign function, has support in $BV_{\rm binary}(\OOmega)$.
In the second approach, motivated by phase-field regularization, we relax
the prior measure to allow for smooth functions $u$ with sharp interfaces near zero;
the implied space ${\mathcal Q}$ for functions $u$ contains
$H^s$ functions for any $s<2-d/2$, whilst  ${\mathcal P}$ contains $H^2$ functions.

We define a prior distribution over the smooth function $v$
or over $u$,
and formulate an associated likelihood determined by $J(u)$.  Bayes' theorem is then employed 
in a form which implies that the posterior probability 
measure is absolutely continuous with respect to the prior probability measure. Taking the sign of such distributions on $v$ yields a distribution for $u$. 
By sampling the distribution one can approximate the mean. It is then interesting to make a connection between this
mean and the solution to the perimeter regularization problem \eqref{perimreg}. 

The main goals of the paper are to investigate how the connection to perimeter 
regularization appears for different (Bayesian) formulations of the
inverse problem, and to demonstrate the performance and applicability of 
these formulations for both the linear inverse problem \eqref{eq:basiceq},
and nonlinear generalizations.

\subsection{Background}
There are many problems in the physical sciences where piecewise
constant reconstruction is of interest, for example in subsurface
inversion and imaging \cite{CW,EnKF_level,Dorn,ILS14,lee2013bayesian,cardiff2009bayesian}  and other
problems in the physical sciences \cite{0266-5611-25-12-125001};
the problem of image deblurring \cite{hansen2006deblurring} (in particular, for barcodes and QR codes \cite{choksi2010,jin2016,van2015regularization,soros2015fast,iwen2013symbol,rioux2019blind}) is also
of interest in the context of piecewise
constant reconstruction. We draw our motivation from these problems
and our numerical experiments are based on
imaging problems possessing a variety of geometric interfaces,
smooth and including edges.

A seminal paper linking probabilistic approaches to classical
numerical methods is \cite{diaconis1988bayesian}, and a review
describing developments since then can be found in \cite{cockayne2019bayesian}.
%,skilling1992bayesian,chkrebtii2013bayesian,schober2014probabilistic, hennig2015probabilistic,briol2016probabilistic,cockayne2017bayesian}
In the context of inverse problems the link between Bayesian and
classical approaches is well-understood in the setting of Gaussian random
field priors: the Bayesian maximum a posteriori (MAP)
estimator \cite{kaipio2006statistical,DLSV13} is
then the solution of a Tikhonov-Phillips regularized least squares problem
\cite{engl1996regularization}.
When more complex priors are used the connection between classical
and Bayesian perspectives is more subtle, even for linear
inverse problems 
\cite{burger2014maximum,helin2011hierarchical, calvetti2007gaussian,calvetti2008hypermodels,lassas2009discretization}; see
\cite{agapiou2018sparsity,clason2018generalized,helin2015maximum}
for recent work generalizing \cite{DLSV13} beyond the Gaussian prior setting. 
Two interesting approaches to Bayesian inversion, both using thresholding
as we  do in this paper, may be found in
\cite{niemi2015dynamic} and \cite{hosseini2017well}.
In the one dimensional setting an interesting construction of
random functions with finite TV norm may be found in
\cite{cohen1997nonlinear}; a Poisson process is used to define
points of discontinuity, with smooth processes between these
discontinuity points.

In interface reconstruction, 
classical methods have been dominated
by inversion techniques which penalize the length of the perimeter
between different subdomains. Two contrasting approaches for describing  interfaces are the use of a level set of a continuous function or a characteristic function which takes just two values. Total variation (TV) regularization has played a central role
\cite{rudin1992nonlinear} and has been shown to lead to
empirically effective methods which are computationally efficient.
The phase-field representation of interfaces is
described in \cite{DecDziEll05}. The method approximates the perimeter using a scaled gradient energy and double well potential for which minimizers have diffuse interfaces of width a small length scale and which encloses a zero level set of the minimizer.  In contrast, the level set 
approach of \cite{sethian1999level} represents interfaces as 
level sets of continuous fields. See \cite{brett2014phase,deckelnick2016double,santosa1996level} for
applications of phase-field and level-set ideas in classical, 
non-Bayesian, inversion for interfaces. 
For simplicity this paper focuses primarily on recovery of a binary
function, taking two known values, with unknown interface. In the more 
general setting of recovering unknown piecewise continuous function, in which
the interface and the values of the function off the interface, are unknown,
the classical TV and Mumford-Shah perimeter regularization methods become 
distinct.  For an elliptic problem, \cite{ChaTai04} use TV regularization 
on a level set function to penalize both perimeter length and jump 
discontinuities; the Mumford-Shah minimization can be written over a 
suitable space to jointly minimize the function and its set of 
discontinuity \cite{RamRin10}.

Computer power has started to render Bayesian
inversion techniques tractable in some applications 
\cite{kaipio2006statistical,stuart2010inverse,dashti2013bayesian}, 
enabling uncertainty quantification. 
In this paper we address the question of how perimeter regularization appears
within Bayesian inversion techniques for the reconstruction of
binary function $u$. This is a notoriously difficult problem,
as made transparent in the paper \cite{lassas2004can} which showed that
use of discrete total variation regularization, in a Bayesian setting,
does not lead to a meaningful problem in the continuum limit;
this work led to the development of new Besov priors in
\cite{lassas2009discretization}, and the combination of TV and Gaussian 
priors \cite{yao2016tv}. Other approaches, with demonstrable numerical success, 
make use of the introduction of 
hyperparameters \cite{lee2013bayesian,calvetti2019hierachical}.
Instead of approximating the TV regularization, one can derive Bayesian approaches based on the  Mumford-Shah functional
\cite{helin2011hierarchical}; these methods can be extended to 
higher order functionals such as Blake-Zisserman \cite{carriero2015survey}.
Our probabilistic level set based method
generalizes to a hierarchical Bayesian approach that learns the 
unknown continuous function off the unknown interfaces, as 
well as the interface itself; the interface may be viewed as a nonparametric
hyperparameter.

\subsection{Our Contribution}

In detail our contributions are as follows:

\vspace{0.1in}

\begin{enumerate}
\item We formulate a Bayesian level set approach and establish conditions under which this 
leads to posterior samples with almost surely finite perimeter, 
and, hence, almost surely finite total variation (TV) norm. This  demonstrates 
that TV regularization arises naturally out of appropriately
chosen Bayesian formulations of inversion. 

\item We formulate a Bayesian based phase-field approach and establish 
a link with perimeter regularization through its MAP estimator.
We prove, for appropriate choice of prior distribution
and parameters carefully scaled with respect to $\ep$ (and so not strictly Bayesian), that 
the maximum a posteriori (MAP) estimator for this phase-field approach has a
 $\Gamma-$limit as $\ep \to 0$. This limit is exactly the perimeter (TV) regularization of the least squares fidelity objective function.

\item For a linear inverse problem we provide numerical investigations 
of these approaches; we also compare to (widely
used)  Gaussian process regression which is a natural method
in this linear setting. These investigations  demonstrate that
the level set approach may be implemented quite cheaply in
comparison with the phase-field approach, for similar levels
of reconstruction accuracy. Also it is demonstrated that
the level set approach
can learn the true perimeter. Gaussian process regression also 
performs well at a low computational cost for the linear problem,
but is not readily extended to nonlinear problems.

\item We provide numerical evidence for the flexibility of the
Bayesian level set approach, by showing an application to a nonlinear inverse problem
arising from the eikonal equation. Within this context, we also show 
that hyperparameters contained in the statistical model
may also be efficiently learnt. 

\end{enumerate}

\vspace{0.1in}

\subsection{Some Notation}

\label{ssec:Not}

We use $|\cdot|$ to denote the Euclidean norm on $\mathbb R^J$. Let $C_\#^{k,\gamma}(\bar\OOmega), k\ge 0$ denote the space of real valued continuous periodic functions
 on $\bar\OOmega$ whose derivatives up to order $k^{\rm th}$
derivative are H\"older  continuous with exponent $\gamma$. By virtue of continuous embedding 
$K$ is  a bounded linear operator on $C_\#^{k,\gamma}(\bar\OOmega)$
for any integer $k\ge 0.$ 
Also   let $H^k_\#(D), k\ge 0$ denote the restriction 
to periodic functions of the Sobolev space of $H^k(D)$ of $k$-times weakly differentiable real-valued functions on $D$. These Sobolev spaces are
readily characterized as weighted $\ell^2$ spaces on Fourier
coefficients \cite{robinson}.
Let $X$ denote the space 
$C(\bar\OOmega)$, restricted to periodic functions, and let $H$
denote $L^2(\OOmega).$ 

Fix constants $\delta>0,  \tau>0, q \ge 0$ and $a_i\ge 0, i=1,2,3$. Denote
$\tilde \delta=(\delta,\tau,q)$ and $\tilde a=(a_1,a_2,a_3)$. 
We define a covariance operator   $\mathcal C_{\ep,\tilde \delta,\tilde a}$  implicitly 
as the solution operator corresponding to the weak formulation of the
following elliptic boundary value problem:
given $f \in H$ find $\eta \in  H^2_\#(\OOmega),$ so that
\begin{align}
\label{eq:cinv_def}
~ \delta\ep^{-2a_1}\Delta^2 \eta-q\delta\ep^{-2a_2}\Delta \eta+\tau^2\delta\ep^{-2a_3}\eta= f. 
\end{align}
Elliptic regularity gives $\eta \in H^4_\#(\OOmega)$ and so we may
define $(\mathcal C_{\ep,\tilde \delta,\tilde a})^{-1}:H^4_\#(\OOmega)\rightarrow H$ by
$(\mathcal C_{\ep,\tilde \delta,\tilde a})^{-1}\eta =f$ for $f\in H.$ 
The Hilbert  space  $\mathcal E_{\ep,\tilde \delta,\tilde a}$,  is defined to be $H^2_\#(\OOmega)$ endowed with the  norm  
$$\|\xi\|_{\mathcal E_{\ep,\tilde \delta,\tilde a}}^2: =\langle (\mathcal C_{\ep,\tilde \delta,\tilde a})^{-\frac{1}{2}}\xi, (\mathcal C_{\ep,\tilde \delta,\tilde a})^{-\frac{1}{2}}\xi \rangle =\delta\int_{\OOmega}
 \bigl(\ep^{-2a_1}|\triangle \xi|^2+q \ep^{-2a_2}|\nabla \xi|^2 +\tau^2\ep^{-2a_3}\xi^2\bigr)\dee x$$
where $\langle \cdot, \cdot \rangle$ denotes the standard $L^2(\OOmega)$
inner-product. By polarization an inner-product is defined on 
$\mathcal E_{\ep,\tilde \delta,\tilde a}$. 
The three parameters $\delta, q$ and $\tau$ weight the contributions
of the $ H^2_\#(\OOmega),  H^1_\#(\OOmega)$ and $L^2(\OOmega)$ terms
appearing in the Hilbert  space  $\mathcal E_{\ep,\tilde \delta,\tilde a}$ norm whereas 
the parameters $a_1, a_2$ and $a_3$ scale these terms with respect to
powers of $\ep$. The Hilbert space $\mathcal E_{\ep,\tilde \delta,\tilde a}$ is exactly the Cameron-Martin space for the Gaussian $\sN(0,\mathcal C_{\ep,\tilde \delta,\tilde a})$, \cite[Definition 6.26]{dashti2013bayesian}.
In the following we write $C$ and $\mathcal E$, with the dependence on the parameters being understood. In the computations it is made clear which values of the parameters are chosen.

\begin{remark}We note that including an $H^2_\#(\OOmega)$
contribution in the Cameron-Martin norm is required in dimensions $d=2,3$
in order to ensure that the underlying Gaussian is supported on
continuous functions. 
In dimension $d=1$ it is possible to remove this 
requirement. \cite{Sivak}.\end{remark}

\begin{remark} The requirement that $\tau>0$ is made to ensure that  $(\mathcal C_{\ep,\tilde \delta,\tilde a})^{-1}$
is invertible on $L^2_\#(\OOmega)$. This could also be addressed 
when $\tau=0$ by working on spaces of functions where the mean-value
is zero.   \end{remark}

\subsection{Outline Of The Paper}

In \cref{sec:B} we formulate three inversion approaches for the linear inverse problem
\eqref{eq:basiceq}, a Bayesian level set based approach, a Bayesian based phase-field  regularization and Gaussian process regression. We  introduce 
level set and phase-field priors, both of which are 
non-Gaussian  and state a well-posedness result for the 
resulting posterior distributions, as well as some relevant properties. 
\Cref{sec:M} characterizes the MAP estimator for the phase-field 
prior. 
Under appropriate parameter scalings, we obtain perimeter regularization as a $\Gamma-$limit for the
MAP estimator in the small noise regime, using the analysis in \cite{hil}.
This $\Gamma-$limit links the MAP estimator to classical deterministic perimeter regularization.
In \cref{sec:N} we describe testing the approaches with numerical experiments based on MCMC.  
 We also discuss the properties of
the length of level sets of Gaussian random fields, and of random fields
whose law has density with respect to a Gaussian random field, and use this
to demonstrate that the level set approach {\it learns} the perimeter.  
These numerical results establish interesting links between Bayesian 
level set inversion and perimeter regularization.  
Within \cref{sec:eik} we go beyond linear inverse problems, 
showing that the Bayesian level set approach readily extends
both to a nonlinear inverse problem arising from the eikonal equation,
and to the learning of unknown hyperparameters from the
prior; for hyperparameter learning we use algorithms introduced
in \cite{DunIglStu17} and further developed in \cite{CheDunPapStu19_preprint}. 
 \cref{sec:A} contains proofs of the main results relating to MAP estimators for the phase-field approach.

\section{Inversion Approaches}

\subsection{A Bayesian Level Set Based Approach To The Inverse Problem}
\label{sec:B}

\subsubsection{Prior And Likelihood} \label{ssec:P}

Let $\one_{\cdot}$ denotes the characteristic function of a set.  
Our model prior for $u$ is that   
$$u=\one_{D_+}-\one_{D_+^c}$$
where $D_+\subset D$ is a random set defined in such a way that
that ${\rm leb}({\overline D_+}\backslash D_+)=0$, almost surely;
this ensures that $u$ is almost surely in $BV_{\rm binary}$. 
 This is achieved  by working with an auxiliary variable $v$ 
and recovering $u$ by an application
of a thresholding (sign) function $S:\bbR \mapsto \{-1,0,+1\}$   defined by
$$S(v)=1, \;v >0, \quad S(0)=0  \quad{\rm and}\quad S(v)=-1, \; v<0.$$
The  prior on $v$ is chosen to be $\mu_{0,\alpha}=\sN(0, C^{\frac{\alpha}{2}})$, with the power satisfying $\alpha>d/2$. 
This  implies almost sure continuity
of $v$. The prior for $u$ is then defined by the pushforward of $\mu_{0,\alpha}$ under $S$
\begin{equation}\label{eq:ufv}
 \mathcal E^S:=\{u=S(v)~|~v\in \mathcal E\}.
 \end{equation}
This is justified since the level sets of the Gaussian
random field $v$ have Lebesgue measure zero \cite[Proposition 2.5]{igl}. 
Also by the lemma which follows, it holds almost surely
that if $\alpha>1+d/2$ then $u \in \{\pm 1\}$ $\OOmega$ a.e.  
and has bounded total variation. 
\begin{lemma}
\label{lem:per}
If function $v$ is drawn from measure $\mu_{0,\alpha}$ with
$\alpha>1+d/2$ then almost surely function $u$ defined by
\eqref{eq:ufv} has finite total variation norm. 
\end{lemma}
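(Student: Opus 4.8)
The plan is to relate the total variation norm of the binary function $u$ to the $(d-1)$-dimensional Hausdorff measure of the zero level set of the Gaussian field $v$, and then to control that measure using the regularity of $v$ guaranteed by the smoothness assumption $\alpha > 1 + d/2$. The key observation is that for a function $u$ of the form \eqref{eq:ufv}, its total variation equals (up to a constant) the perimeter of the set $\{v>0\}$, which in turn is $\mathcal{H}^{d-1}(\{v=0\})$ provided the level set is sufficiently regular. So the task reduces to showing that almost surely the zero level set of $v$ is a nice hypersurface of finite $(d-1)$-measure.

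First I would record the gain in regularity: by the support statement in \cref{ssec:LS}, drawing $v$ from $\mu_{0,\alpha}$ with $\alpha > 1 + d/2$ places $v$ almost surely in a H\"older space $X^{k,\gamma}$ with $k \ge 1$, so in particular $v \in C^1(\bar{D})$ almost surely. This is the crucial upgrade over the weaker hypothesis $\alpha > d/2$ of the prior construction, which only guarantees continuity. Second, I would invoke the fact that for a \emph{nondegenerate} Gaussian field, zero is almost surely a regular value of $v$: that is, almost surely $\nabla v(x) \neq 0$ at every point where $v(x)=0$. This is the standard Bulinskaya-type argument, ensuring that the zero set $\{v=0\}$ is a compact embedded $C^1$ hypersurface by the implicit function theorem, and that the open set $D_+ = \{v>0\}$ satisfies $\mathrm{leb}(\overline{D_+}\setminus D_+)=0$ as required.

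Third, with $\{v=0\}$ a compact $C^1$ hypersurface in the bounded periodic cell $\bar{D}$, its $(d-1)$-dimensional Hausdorff measure is finite, and by the coarea formula (or the structure theorem for sets of finite perimeter) this measure equals the perimeter of $D_+$, hence $u = \one_{D_+} - \one_{D_+^c}$ has finite total variation norm. To make the finiteness quantitative and to upgrade an almost-everywhere statement to the almost-sure statement, I would estimate $\mathcal{H}^{d-1}(\{v=0\})$ via an integral-geometric bound of the form
\[
\mathcal{H}^{d-1}(\{v=0\}) \;\le\; C \int_{\{|v| < s\}} |\nabla v|\,\dee x \Big/ s
\]
for small $s$, or directly through a coarea-type identity integrating $|\nabla v|\,\delta_0(v)$, whose finiteness follows from $v \in C^1$ together with the nondegeneracy at the zero set.

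The main obstacle I expect is not the geometric measure theory, which is classical once $v \in C^1$ and zero is a regular value, but rather establishing the nondegeneracy of the gradient on the zero set in the \emph{almost sure} (rather than merely expected) sense for the specific Gaussian field defined by the covariance $C^{\alpha/2}$. One must verify that the law of $(v(x),\nabla v(x))$ is nondegenerate enough for the Bulinskaya lemma to apply uniformly over $\bar{D}$, which requires checking that the covariance operator produces a field whose joint distribution of values and gradients has a bounded density; this should follow from the periodic structure and the spectral decay of $C^{\alpha/2}$, but the uniform control over the whole cell is the delicate technical point. A clean alternative, avoiding pointwise nondegeneracy, is to bound $\mathbb{E}\,\mathcal{H}^{d-1}(\{v=0\})$ directly by a Kac--Rice-type formula and deduce almost-sure finiteness from finiteness of the expectation; I would pursue whichever route keeps the regularity bookkeeping lightest.
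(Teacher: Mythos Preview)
Your proposal is correct and follows essentially the same strategy as the paper: both first use $\alpha > 1 + d/2$ to place $v$ in $C^1$ almost surely, then argue that the zero level set has finite $(d-1)$-dimensional measure, and conclude that the binary $u$ has finite total variation. The only difference is one of exposition: the paper dispatches the middle step in a single sentence by citing \cite{marie2006level}, whereas you sketch the underlying argument (Bulinskaya-type nondegeneracy or a Kac--Rice expectation bound); your more detailed outline is precisely the content such a reference supplies.
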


\begin{proof}
If $\alpha>1+d/2$ then almost surely $v \sim \mu_{0,\alpha}$ 
will be a $C^1$ function.
The paper \cite{marie2006level} establishes that the level set
$v=0$ will then have finite length, almost surely. Since $u$
is a binary function given by \eqref{eq:ufv} this establishes that
$u$ will have finite total variation, almost surely.
\end{proof}

If we set $u=S(v)$ it follows that model  \cref{eq:basiceq} becomes
$$y=KS(v)+\ep^c\eta$$
and hence that $y|v$ is distributed as the 
Gaussian $\sN(KS(v),\ep^{2c}\GGamma)$. The likelihood is given by the Gaussian density proportional to 
\[
\exp(-J(S(v))) = \exp\left(-\frac{1}{2\ep^{2c}}\Bigl|\GGamma^{-\frac{1}{2}}\bigl(y-KS(v)\bigr)\Bigr|^2\right).
\]
$\Phi(v;y)=J\circ S(v)$ is the negative log-likelihood function.

\subsubsection{Posterior}

In the following proposition we to establish a relationship between the 
(posterior) distribution $\mu^y$ of the 
random variable $v|y$, the prior on $v$ and the likelihood on $y|v$,
by means of an infinite dimensional Bayes' Theorem, \cite{dashti2013bayesian}: 

\begin{proposition}
\label{prop:12}
Let $\alpha>d/2.$ Then
the posterior probability $\mu^y$ on random variable $v|y$
is a probability measure supported on 
$C_\#^{k,\gamma}(\bar\OOmega)$
for all $\gamma \in [0,\gamma')$, where $k \in \{0,1,2,\cdots\}$ is 
chosen so that
$\gamma':=\alpha-\frac{d}{2}-k \in (0,1],$
and is determined by
\[
\frac{\dee\mu^y}{\dee\mu_{0,\alpha}}=\frac{1}{Z}
\exp\left(-\frac{1}{2\ep^{2c}}\Bigl|\GGamma^{-\frac{1}{2}}\bigl(y-KS(v)\bigr)\Bigr|^2\right);
\]
here $Z \in (0,\infty)$ is the normalization constant that makes 
$\mu^y$ a probability measure.  
\end{proposition}
\begin{proof}
This follows from an application of the theory in \cite{igl}.
\end{proof}

We state some beneficial properties of the the posterior, making it fit for purpose in this 
application. Specifically, the posterior $\mu^y$ has a continuous dependence on $y$, and the 
pushforward under $S$ defines an implied posterior $\nu^y$, whose samples have finite total variation.

Recall the Hellinger distance between measures $\mu$ and $\mu'$, 
defined  with respect to any common reference  measure $\mu_0$ (but independent of it)
and given by 
\[
d_{\rm hell}(\mu,\mu')=\sqrt{\left(\int_X\left(\sqrt\frac{\dee\mu}{\dee\mu_0}-\sqrt\frac{\dee\mu'}{\dee\mu_0} \right)^2 \dee\mu_0\right)}.
\]

\begin{proposition}
\label{prop:12b} 
Given the setting of \cref{prop:12}
(i) the posterior measure $\mu^y$ 
is locally Lipschitz continuous with respect to $y~\in~\IR^J$; more precisely: 
if $|y|< \rho$ and $|y'|< \rho$ for a constant $\rho>0$ then there is a constant $C=C(\rho)$ such that
\[
d_{\rm hell}(\mu^y,\mu^{y'})\le C(\rho)|y-y'|;
\]
(ii) if $\alpha>1+d/2$ then $u=S(v)$, with $v \sim \mu^y$, has
finite total variation norm, almost surely.
\end{proposition}

\begin{proof}
(i) follows from application of the theory in \cite{igl}. (ii) follows by noting that, since $\mu^y$ has density
with respect to $\mu_{0,\alpha}$, anything which holds almost 
surely under $\mu_{0,\alpha}$, will also hold almost surely
under $\mu^y$. Application of \cref{lem:per} gives the
desired result.
\end{proof}

\subsection{A Phase-Field Regularization Based Bayesian Approach To The Inverse Problem}
\label{sec:PF}

%The Bayesian approach of  Section \ref{sec:B} uses a smooth variable $v$ to recover a zero level set
%whilst ignoring the fact that the data is based on values in $\{-1,1\} ~+ ~\mbox{noise}$. 
The Bayesian level set approach of Section \ref{sec:B} is formulated in
terms of a prior on a smooth variable $v$ whose push forward under the
thresholding map gives a function $u$ taking values in $\{-1,1\}$, respecting
the fact that the data takes values of the
form $\{-1,1\} ~+ ~\mbox{noise}$.
Here we describe a different approach, one in which the $\ep-$dependent prior on $u$ may take
values anywhere in $\mathbb{R}$, but concentrates close to $\{-1,1\}$
when $\ep \ll 1.$ This leads to a connection with phase-field regularization.

\subsubsection{Prior And Likelihood}

Fixing constants $r,b>0$ define
$\Psi:X \to \IR^+$ by
\begin{equation}
\label{eq:psi}
\Psi(u)=\frac{r}{\ep^b}\int_{\OOmega}\frac{1}{4}\bigl(1-u(x)^2\bigr)^2\,\dee x.
\end{equation}
We define the prior probability measure $\nu_0$ on $X$
via the Radon-Nikodym derivative
\be
\frac{\dee\nu_0}{\dee\mu_0}=\frac{1}{Z_{0}}\exp\left(-\Psi(u)\right).
\label{eq:nu02}
\ee
where $\mu_0$ is the   Gaussian measure $\mu_0=\sN(0,C)$ on 
the Hilbert space $H$.The normalization $Z_{0}$ is chosen so that $\nu_0$ is a probability measure.
Since the Gaussian measure $\mu_0$ is supported on continuous functions in dimensions
$2$ and $3$,  so is the non-Gaussian measure $\nu_0$. Furthermore,
since $r,b>0$ and $\ep \ll 1,$ this measure will concentrate on functions taking
values close to $\pm 1.$ In what follows the choice of parameter $b$ will be crucial, and will be explained
below; the precise value of the positive parameter
$r$ is less significant.

The random variable $y|u$, 
given by \cref{eq:basiceq}, has a Gaussian distribution 
$\sN(Ku,\ep^{2c}\GGamma)$ 
and 
the likelihood is the Gaussian density proportional to
\[
\exp(-J(u)) = \exp\left(-\frac{1}{2\ep^{2c}}\Bigl|\GGamma^{-\frac{1}{2}}\bigl(y-Ku)\bigr)\Bigr|^2\right).
\]
$\Phi(u;y)=J(u)$ is the negative log-likelihood function.

\subsubsection{Posterior}
\label{ssec:post}
We let $\nu^y(\dee u)$ denote the probability of the conditioned
random variable $u|y$.The following propositions are the analogue of \cref{prop:12} and 
\cref{prop:12b}. They are proved by a straightforward application of the 
theory in \cite{dashti2013bayesian,igl}: 
\begin{proposition}
\label{prop:1}
The posterior probability $\nu^y$ on random variable $u|y$ is a probability measure 
supported on $C_\#^{k,\gamma}(\bar\OOmega)$ for any $\gamma<2-d/2$
and determined by
\[
\frac{\dee\nu^y}{\dee\nu_0}=\frac{1}{Z}
\exp\left(-\frac{1}{2\ep^{2c}}\Bigl|\GGamma^{-\frac{1}{2}}(y-Ku)\Bigr|^2\right),
\]
where $Z \in (0,\infty)$ is the normalization constant that makes 
$\nu^y$ a probability measure. 
\end{proposition}

\begin{proposition}
\label{prop:1b}

In the setting of \cref{prop:1}, the posterior measure $\nu^y$ 
is locally Lipschitz continuous with respect to $y~\in~\IR^J$; more precisely: 
if $|y|< \rho$ and $|y'|< \rho$ for a constant $\rho>0$ then there is a constant $C=C(\rho)$ such that
\[
d_{\rm hell}(\nu^y,\nu^{y'})\le C(\rho)|y-y'|.
\]
\end{proposition}

\begin{remark}
For computations, it is convenient to draw samples from the Gaussian 
prior  $\mu_0$, not the prior $\nu_0$; to this end we note that
the posterior $\nu^y$ may be written as
\[
\frac{\dee\nu^y}{\dee\mu_0}=\frac{1}{Z_0Z}\exp\left(-\frac{r}{\ep^b}\int_\OOmega\frac{1}{4}\bigl(1-u(x)^2\bigr)^2\,
\dee x -\frac{1}{2\ep^{2c}}\Bigl|\GGamma^{-\frac{1}{2}}\bigl(y-Ku\bigr)\Bigr|^2
\right),
\] for normalization constants $Z_0,Z\in(0,\infty)$.
\end{remark}

\subsection{A Gaussian Process Regression On The Inverse Problem}
\label{sss:gp}

A popular approach for linear inverse problems
is to use a Gaussian process (GP) 
regression to find posterior parameter 
distributions \cite{book:Owh19,book:RasWil06}; this
maybe combined with thresholding to perform classification
\cite{book:RasWil06}, an approach we adapt here to learning
a binary function. 

A Gaussian process is a collection of random 
variables, with all finite subsets being described 
by a joint Gaussian distribution. Adapted to our specific
inverse problem, Gaussian process regression
proceeds by imposing a Gaussian prior on the unknown function, and
then conditioning this on $y$ given by \eqref{eq:basiceq}.
We take as prior $\mu_0$ the Gaussian $\sN(0,C).$
The posterior is completely described as $\nu^y = \sN(m_y,C_y)$, with mean $m_y$ and covariance function $C_y$. This is attractive as with a closed form for $m_y$ and $C_y$, we may sample the posterior directly without the need for MCMC, thus it is very computationally efficient. Closed forms are found to be
\[
m_y  = C K^*(\ep^{2c} \Sigma + KCK^*)^{-1}y,\quad C_y = C - C K^*(\ep^{2c} \Sigma + KCK^*)^{-1}KC.
\]
In the linear setting $m_y$ is the MAP estimator of the posterior, thus is the unique minimizer of the the functional 
\begin{equation}\label{eq:GPfunct}
J(u) + \mmm[R](u)= \frac{1}{2\ep^{2c}}\Big|\Sigma^{-\frac{1}{2}}(y-Ku)\Big|^2
+\frac{1}{2}\|u\|_{\mathcal E}^2.
\end{equation}
This approach is unnatural from a modelling point of view, as the difference $y-Ku$ appearing in \eqref{eq:GPfunct} contains comparison between data produced from a binary field and the forward map of a non-binary field. It is therefore difficult to interpret the results from this approach. 
Nonetheless we proceed in a fashion standard in machine learning, namely to threshold
the solution of the regression to obtain a classifier; in our particular
setting this corresponds to application of $S$ to $m_y$, or to samples
from the Gaussian posterior distribution.
We also note that the Gaussian process regression methodology
is very specific to the linear inverse problems and does
not extend directly to nonlinear forward mappings.

\section{MAP Estimators, Phase Field Regularization And $\Gamma$-convergence}
\label{sec:M}
 A MAP estimator of a Bayesian posterior distribution
maximizes the posterior probability. Intuitively, the MAP estimator
locates points in $X$ at which arbitrarily small balls will have
maximal probability. It is defined as follows, \cite{DLSV13}.
\begin{definition}\label{def:MAP}
A point $\bar z\in X$ is a MAP estimator for the posterior measure $\nu^y$ if
\[
\lim_{\rho\to 0}\frac{\nu^y(B^\rho(\bar z))}{\nu^y(B^\rho(z^\rho))}=1
\]
where \[
z^\rho=\underset{z\in X}{\rm argmax}\,\nu^y(B^\rho(z))
\]
and  $B^\rho(z),\rho>0$ is the ball centred at $z\in X$ with radius $\rho$.
\end{definition}
 
 We explore this concept in the context of phase-field regularization.
 Set $\Phi:X \times \IR^J \to \IR^+$ to be the sum 
\begin{equation}
\Phi(u;y)=\Psi(u)+\frac{1}{2\ep^{2c}}|\GGamma^{-\frac{1}{2}}(y-Ku)|^2,
\label{eq:phi}
\end{equation}
where $\Psi$ is defined in \eqref{eq:psi}. We define the Onsager-Machlup functional $J^\ep$, associated with the measure $\nu^y$ by 
$$
J^\ep(u)=\left\{\begin{array}{rl}
\frac12 \|u\|_{\mathcal{E}}^2+\Phi(u;y) & \text{if } u\in \mathcal{E},\\
\infty & \text{if } u\notin \mathcal{E}.
\end{array}
\right.
$$
Recall $(\mathcal{E},\|\cdot\|_\mathcal{E})$ is the Hilbert space and corresponding norm defined in \cref{ssec:Not}. From a probabilistic perspective, $(\mathcal{E}, \|\cdot\|_\mathcal{E})$ is the Cameron-Martin space associated with the Gaussian measure $\sN(0,C)$ \cite[Definition 6.26]{dashti2013bayesian}. We have the following result demonstrating the role of the Onsager-Machlup functional defined on the Cameron-Martin space from \cite[Theorem 3.5]{DLSV13}.

\begin{proposition}
\label{prop:2}
There exists a  MAP estimator for the posterior measure $\nu^y$ which is a minimizer of the functional $J^\ep$.
\end{proposition}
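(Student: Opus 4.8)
\section*{Proof proposal}

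The plan is to reduce the statement to the general theory of MAP estimators for measures absolutely continuous with respect to a Gaussian, as developed in \cite{DLSV13}, and then to supply the attainment of the infimum by the direct method of the calculus of variations. First I would combine the two Radon--Nikodym derivatives, $\dee\nu_0/\dee\mu_0 \propto \exp(-\Psi(u))$ from \eqref{eq:nu02} and $\dee\nu^y/\dee\nu_0 \propto \exp\bigl(-\tfrac{1}{2\ep^{2c}}|\GGamma^{-1/2}(y-Ku)|^2\bigr)$ from Proposition \ref{prop:1}, to express the posterior directly against the Gaussian base measure $\mu_0 = \sN(0,C)$:
\[
\frac{\dee\nu^y}{\dee\mu_0} \propto \exp\bigl(-\Phi(u;y)\bigr),
\]
with $\Phi$ exactly the potential of \eqref{eq:phi}. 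In this form $\nu^y$ falls within the setting of \cite{DLSV13}, whose Cameron--Martin space is $E = H^2_\#(\OOmega)$ and whose ambient Banach space is $X = C(\bar\OOmega)$, with the compact embedding $E \hookrightarrow X$ valid in dimensions $d=2,3$ precisely because of the $H^2_\#$ contribution to $\|\cdot\|_E$.

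The second step is to verify the hypotheses of \cite[Theorem 3.5]{DLSV13} for $\Phi$. These amount to: (i) a lower bound, which is immediate since both the double-well term $\Psi$ and the least-squares misfit are nonnegative, so $\Phi \geq 0$; (ii) local boundedness above on $X$-balls; and (iii) local Lipschitz continuity on $X$-balls. For the misfit, boundedness of $K\colon L^1(\OOmega) \to \IR^J$ (hence on $X \subset L^1$) makes $u \mapsto |\GGamma^{-1/2}(y-Ku)|^2$ a continuous quadratic, locally Lipschitz on $X$-bounded sets. For $\Psi$ I would use that convergence in $X = C(\bar\OOmega)$ is uniform: on the set $\{\|u\|_X \leq \rho\}$ the integrand $(1-u^2)^2$ and its derivative in $u$ are bounded by constants depending only on $\rho$, so integrating the pointwise Lipschitz estimate over the finite-measure domain $\OOmega$ yields the required bounds. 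With (i)--(iii) in hand, \cite[Theorem 3.5]{DLSV13} identifies the MAP estimators of Definition \ref{def:MAP} with the minimizers of $J^\ep$.

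It remains to show that $J^\ep$ attains its infimum. Here I would argue by the direct method. Along a minimizing sequence $u_n$, boundedness of $J^\ep(u_n)$ together with $\Phi \geq 0$ forces $\|u_n\|_E$ to be bounded, so $u_n$ is bounded in $H^2_\#(\OOmega)$; by the compact Sobolev embedding $H^2_\#(\OOmega) \hookrightarrow\hookrightarrow C(\bar\OOmega)$ (valid for $d \leq 3$) a subsequence converges in $X$ to some $\bar u \in E$. The Cameron--Martin norm $\|\cdot\|_E$ is weakly lower semicontinuous, while $\Phi$ is continuous with respect to uniform convergence by the estimates of the second step; hence $J^\ep(\bar u) \leq \liminf_n J^\ep(u_n)$, so $\bar u$ is a minimizer. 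Combined with the previous step, $\bar u$ is the desired MAP estimator.

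The main obstacle I anticipate is not the abstract invocation of \cite{DLSV13} but the careful verification of the local Lipschitz property of the quartic double-well functional $\Psi$ in the uniform norm, and ensuring that all constants appearing there, and in the lower bound, are controlled uniformly on $X$-balls. This is what ties the argument to the choice $X = C(\bar\OOmega)$ and, through the compact embedding, to the presence of the $H^2_\#$ term in the Cameron--Martin norm in dimensions $d = 2,3$.
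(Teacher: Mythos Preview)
Your proposal is correct and follows essentially the same route as the paper: express $\nu^y$ as a density with respect to the Gaussian $\mu_0$, verify that $\Phi(\cdot,y)$ is bounded below, locally bounded above, and locally Lipschitz on $X$-balls (the paper does exactly this computation, factoring $(1-u^2)^2-(1-v^2)^2$ and using that $K$ is bounded on $L^1$), and then invoke the MAP theorem from \cite{DLSV13}. The only difference is cosmetic: the paper cites Theorem~4.12 of \cite{DLSV13} and lets that theorem deliver both the identification with minimizers of $J^\ep$ and their existence in one stroke, whereas you additionally spell out a direct-method argument for attainment; this is redundant but harmless.
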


The functional $J^\ep(u)$ can be written as
\begin{equation}
\label{eq:conc1}
J^\ep(u)=\ep^{-2a_1-3}I^\ep(u),
\end{equation}
where
\begin{align*}
I^\ep(u)=&\frac12{\delta\ep^3}\|\triangle u\|^2_{L^2(\OOmega)}
+\frac12\delta q\ep^{3+2(a_1-a_2)}\|\nabla u\|^2_{L^2(\OOmega)}
+\frac12\delta\tau^2 \ep^{3+2(a_1-a_3)}\|u\|^2_{L^2(\OOmega)}\\
&+r\ep^{3+2a_1-b}\int_\OOmega\frac{1}{4}\bigl(1-u(x)^2\bigr)^2\,\dee x+\frac{1}{2}\ep^{3+2a_1-2c}\bigl|\GGamma^{-\frac{1}{2}}(y-Ku)\bigr|^2.
\end{align*}
We consider the case where
\begin{equation}
\label{eq:scalings2}
a_2-a_1=1, \quad 3+2a_1=b-1=2c,\quad 3+2(a_1-a_3)=a>0.
\end{equation}
With these parameter constraints the functional $I^\ep(u)$ becomes,
for $u\in H^2_\#(\OOmega)$, 
\begin{multline*}
I^\ep(u)=\int_\OOmega\left(\frac12{\delta\ep^3}|\triangle u|^2
+\frac12\delta q\ep|\nabla u|^2
+\frac{r}{4\ep}\bigl(1-u(x)^2\bigr)^2+\delta\tau^2\ep^a u(x)^2\right)\dee x\\
+\frac{1}{2}
\bigl|\GGamma^{-\frac{1}{2}}(y-Ku)\bigr|^2
\end{multline*}
and $I^\ep(u)=+\infty$ when $u\in H\setminus H^2_\#(\OOmega)$.

\begin{definition}
	Define the following two functionals and constant
\[
I_0^\delta=\frac{1}{2}\int_\OOmega P^\delta|\nabla u|\,\dee x+\frac12|\GGamma^{-\frac{1}{2}}(y-Ku)|^2,\ \ if\ u\in BV_{\rm binary}(\OOmega),
\]
%where $BV_{\rm binary}(\OOmega)=\{\psi\in BV(\OOmega):\ \psi(\OOmega)\subset\{\pm 1\}\}$.
\[
e^\delta(U)=\int_{-\infty}^\infty\left(\frac{1}{2}\delta(U''(t))^2+\frac{q}{2}\delta(U'(t))^2+\frac{r}{4}(1-U(t)^2)^2\right)\dee t;
\]
\[
P^\delta=\inf_{U\ {\rm odd}}e^\delta(U).
\]
\end{definition}
Based on the work  of Hilhorst et al \cite{hil}. 
we  have the following theorem for  $\Gamma-$ convergence of the functional $I^\ep$:

\begin{theorem}\label{thm:3}

Then \[
I_0^\delta=\lim_{\ep\to 0} I^\ep
\]
in the sense of $\Gamma-$convergence in the strong $L^1(\OOmega)$ topology.
\end{theorem}
\begin{proof}See \cref{sec:A}. 
\end{proof}

This  theorem shows that the MAP estimator is, for small observational noise
$\ep^c\eta$, close to a perimeter regularization. Furthermore,
since $2a_1+3>0$, \cref{eq:conc1} together with the preceding $\Gamma-$limit theorem 
suggest that, when $\ep \ll 1$, the measure will approximately concentrate
on a single point close to a minimizer of $I_0^\delta.$ 
Our numerical results, presented in the next section, 
support this conjecture.

\begin{remark}
This establishes a link with perimeter regularization for the Bayesian phase-field approach at the level of the MAP estimator. This is not available for the Bayesian level set method because MAP estimators do not exist \cite{igl}. Conversely, the Bayesian level set approach has a link to perimeter length at the level of samples from the posterior (see \cref{lem:per}), which is not true for the phase-field approach. We explain why this is the case.
Recall that almost sure properties of the prior are inherited in the posterior.
Prior samples are drawn from the centred Gaussian with covariance $C$, 
which corresponds to choosing $\alpha=2$. In 
dimension $d \ge 2$ we thus do not have $\alpha>1+d/2$ and 
we cannot deduce that samples have finite perimeter almost surely.  
(Numerical results, reported in subsection \ref{sssec:pp}, 
demonstrate that the lower
bound $\alpha=1+d/2$ is sharp and that random draws beneath this value
do not have finite perimeter almost surely).
MAP estimators, on the other hand, will live in the Cameron-Martin
space of the underlying Gaussian reference measure and are necessarily
smoother than draws from the measure itself \cite{DLSV13}.
Thus there is no contradiction between the fact that 
the MAP estimator, for small $\ep$, approximately
penalizes the perimeter, whilst samples from the posterior may have
infinite perimeter. 
\end{remark}

\begin{remark}
The phase-field approach has the (undesirable) property that, 
for $c>0$, the prior construction
depends on the noise-level $\ep^c$ through \eqref{eq:scalings2} meaning that it is not strictly
Bayesian.
\end{remark}

\section{Numerical Simulations}
\label{sec:N}

\subsection{Test Problems}
\label{ssec:linip}

We test the three inversion techniques on three images referred to as Truth A, Truth B and Truth C. 
These are three fields $u^{\dagger}$ lying in the  set of $BV_{\rm binary}$ images   and are illustrated in 
the obvious way  in \cref{f:truth}.  Truth A and Truth B are observed on a uniform grid of $15\times 15$ points, 
Truth C is observed at 50 uniformly distributed points, and all of these observations are corrupted by additive 
Gaussian noise as in equation \cref{eq:basiceq}.
Pointwise observation does not fit our theory as we assume $K$ is linear on $L^1(D)$; however mollification can be used to address this and leads to results which are not different in any substantive way, as noted in \cite{IglLawStu13}. 
\begin{figure}
\begin{center}
\includegraphics[width=\textwidth,trim=4.5cm 0.5cm 1.5cm 1cm, clip]{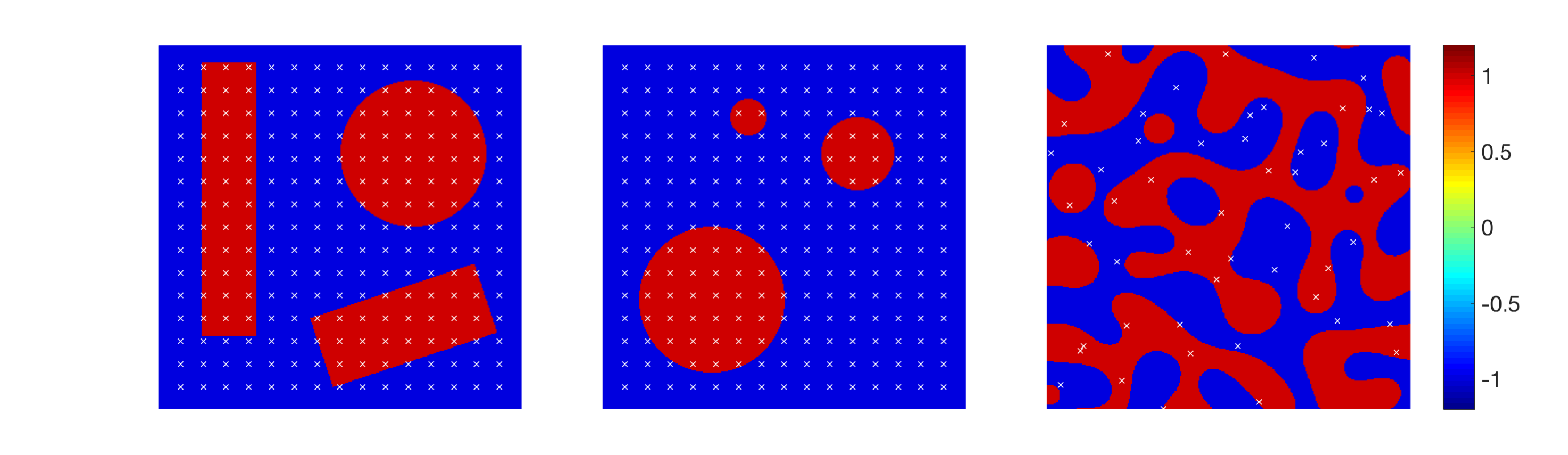}
\end{center}
\caption{The three true fields used for inversion; the field on the left will be referred 
to as Truth A, the field in the middle as 
Truth B and the field on the right as Truth C. The sets of observation points are shown in each figure.}
\label{f:truth}
\end{figure}

In order to avoid an inverse crime \cite{kaipio2006statistical}, Truth A and Truth B are generated on a square 
mesh of $2^{16}$ points, and Truth C is generated on a square mesh of $320^2$ points, but the Gaussian random fields are constructed over a mesh of $2^{14}$ points ($N=2^7$ in \cref{ssec:GF}). We perform numerical experiments in both the small noise and order one noise regimes.

\subsubsection{Small Observational Noise Set-Up}
\label{ssec:thisone}
We set $c=3/2$ and $\ep=0.01.$ The implied standard deviation
of the observational noise is thus $0.001.$ 
We make the 
choices of parameters in the prior covariance 
operator $C(=C_{\ep,\tilde \delta,\tilde a})$, $a_1 = 0$, $a_2 = 1$, $a_3 = 0$, $b=4$ and $r = 1$ for both the Bayesian level set approach 
and the phase-field approach. For the Bayesian level set approach  we set $\delta = 1$, $q=0$, $\tau = 50$ and $\alpha=3$ whereas for the phase-field approach $\delta = 0.01$, $q=0.1$,  $\tau=1$ and $\alpha=2$.
Thus we ensure that  the relations \cref{eq:scalings2} hold so that the phase-field  MAP estimator for $\nu^y$ 
 approximates the minimizer of $I_0^\delta$ as given 
in \cref{thm:3} and we expect the posterior mass to concentrate fairly
close to this MAP estimator.
 Note that in 
general we need not insist on the parameters being related via \cref{eq:scalings2} for 
the level set formulation; this is because, unlike the 
phase-field formulation,
there is no MAP estimator whose properties we are
seeking to control via parameter choices. 
For these small noise experiments the
GP regression used the same parameters as 
for the Bayesian level set method. 
\subsubsection{Order One Observational Noise Set-Up}
We set $c=0.$ Note that now $\ep$ does not enter the observational
noise; it is simply a parameter that enters the prior and so the phase-field formulation is truly Bayesian.  With this choice
of $c$ we require, for the phase-field formulation,
$a_1=-3/2, a_2=-1/2, a_3=-1, b=1$. 
We also set $\delta = 100$, $q=0.1, \tau=1$, $\alpha=2$ 
and $r = 1$. For the level set formulation we retain the same choice of parameters as for the small noise case above.
For the GP regression we
use the same parameters as for the phase-field approach for
these order one noise experiments.

\subsection{Sampling From 
The Gaussian Prior Space}
\label{ssec:GF}

We describe how to sample numerically from Gaussian 
priors $\zeta_0=\sN(0,\sC)$ that are
key to the techniques outlined in the
preceding two subsections. Here $\sC$ is either $C^{\frac{\alpha}{2}}$ or $C$.
We consider the case that $D$ is the  unit square $(0,1)^2$.
Let $\{\lambda_k\}$ denote the eigenvalues of $\sC$ in increasing order with 
corresponding $L^2(D)$-normalized eigenfunctions (which are Fourier
modes) $\{\varphi_k\}$. Then samples $z$ from 
$\zeta_0$ may be expressed through the 
Karhunen-Lo\`eve expansion as
\begin{align}
\label{eq:kl_sum}
z(x) = \sum_{k=1}^\infty \lambda_{k}^{\frac{1}{2}}\xi_{k}\varphi_{k}(x),\quad \xi_k\sim \sN(0,1)\text{ i.i.d.}
\end{align}
We implement an approximation to
this by jointly approximating the field via spectral truncation and evaluation on a discrete grid of points. Such an approximation may 
be efficiently implemented using the Fast Fourier Transform. We work on
a uniformly spaced grid $\{x_i\}$ of $N^d$ points in $D$. An approximate sample on this grid is then given by 
\[
z^N(x_i) = \sum_{k=1}^{N^d} \lambda_{k}^{\frac{1}{2}}\xi_{k}\varphi_{k}(x_i), \quad \xi_k\sim \sN(0,1)\text{ i.i.d.}
\]
All of our numerical results  are performed on the
two dimensional grid which arises from this approach 
to generating Gaussian random fields in dimension $d=2.$
In practice we choose $N=2^7$, and so the discrete grid 
for our inversion is $2^{14}$ points.

\subsection{MCMC Simulation}
Markov Chain Monte Carlo (MCMC) simulations may be used to sample measures $\nu^y$. 

 In all MCMC runs we generate $10^6$ samples and, when computing means, discard the first 
$5\times 10^5$ samples as burn-in . 

We employ the preconditioned Crank-Nicolson (pCN) algorithm
\cite{beskos2008mcmc,cotter2013mcmc} which may be used to 
sample any measure $\sigma$ of the form
\[
\frac{\dee\sigma}{\dee \sigma_0}(z) = \frac{1}{Z}\exp\left(-A(z)\right),\quad \sigma_0 = \sN(0,C),
\]
without computing derivatives of $A(\cdot).$ 
 Both of our posterior measures can be written in this form. For the Bayesian level set approach we have
\[
\frac{\dee\nu^y}{\dee\mu_{0,\alpha}}=\frac{1}{Z}
\exp\left(-\frac{1}{2\ep^{2c}}\Bigl|\GGamma^{-\frac{1}{2}}\bigl(y-KS(v)\bigr)\Bigr|^2\right),
\]
whereas  for the phase-field approach we have
\[
\frac{\dee\nu^y}{\dee\mu_0}=\frac{1}{Z}\exp\left(-\frac{r}{\ep^b}\int_\OOmega\frac{1}{4}\bigl(1-u(x)^2\bigr)^2\,\dee x 
-\frac{1}{2\ep^{2c}}\Bigl|\GGamma^{-\frac{1}{2}}\bigl(y-Ku\bigr)\Bigr|^2
\right),
\]
where $\mu_0,\mu_{0,\alpha}$ are Gaussian measures, and with the appropriate normalization constants $Z$.
 The pCN method has the advantage that, unlike the standard Random Walk Metropolis MCMC algorithm, its rate of convergence to equilibrium can be bounded below independently of the number of terms used in the truncated Karhunen-Loève
expansion described in \cref{ssec:GF} \cite{HSV14}.

In the notation of \cite{cotter2013mcmc} for the pCN algorithm, $\beta \in (0,1]$ denotes the proposal variance parameter.  Note that
larger $\beta$ tends to lead to smaller acceptance probability, but
to greater exploration of state space when steps are accepted; the 
optimal $\beta$ is a trade-off between these two competing effects.
Depending on the noise model 
and number of observations, we take the proposal standard deviation 
parameter $\beta$ between $0.02$ and $0.1$ for level set 
simulations and $\beta$ between 
$0.002$ and $0.02$ for phase-field simulations. These choices are made in order to balance acceptance rate and size of proposed move with a view
to optimizing the convergence rate of the Markov chain.

We simply {\em assume} that the resulting Markov 
chains $\{\eta^{(m)}\}$ are ergodic and 
make the approximation for an associated measure $\nu^y$ that 
$${\mathbb E}^{\nu^y} g(\eta) \approx \frac{1}{M}\sum_{m=1}^M g(\eta^{(m)})+e_M$$
where the error $e_M$ is Gaussian with variance $c_g/M$, and $c_g$ is the
integrated auto-correlation of $g(\eta^{(m)}).$ %A similar expression also holds
%for expectations under $\mu^y$ using samples $\{v^{(m)}\}.$ 
We do not impose specific stopping criteria on the Markov chains, rather
we will examine the approximation qualities derived from the chains,
as a function of $M$, and study the convergence to equilibrium of quantities
of interest; in particular in \cref{sssec:comp} we compare the
acceptance probability of the chain, as a function of $M$, for
the level set and phase-field approaches. 
The samples up to step $M$ can then be used to produce point estimates 
for the fields, by calculating, for example, their mean or the sign of their mean. We compare the 
cost of sampling versus the quality of reconstruction with these point estimates, 
for differing  formulations.

\begin{remark}
	The theory in \cite{HSV14} demonstrates ergodicity for problems similar
to those arising in the phase-field formulation. Developing an
analogous theory for the level set formulation is an open and interesting research direction; however our 
numerics do suggest that ergodicity holds in this case too. \end{remark}

\begin{remark}
Preliminary numerical results for the one dimensional analogue of
the problem studied here may be found in \cite{Sivak}. They are
consistent with what we report here in dimension two.
\end{remark}

\subsubsection{Computational Cost}\label{sssec:comp}

For MCMC sampling, which we use for both the phase-field and level set
approaches,  every set of the Markov chain requires an evaluation of $A(u).$ 
Due to the presence of an extra integral term, this evaluation will typically be more expensive 
for the phase-field model than the level set model; for the simulations performed here, 
evaluation of $A(u)$ is approximately twice as expensive for the phase-field model than for the 
level set model. For the GP regression simulations no sampling is required and so the 
computational cost is significantly cheaper; the means were calculated from the expression 
in \cref{sss:gp}, with the cost arising from the matrix multiplications and inversion involved.

For the phase-field and level set approaches, the most significant discrepancy
in computational cost arises from the statistical properties of the 
Markov chain used to sample the posterior approximately. In \cref{f:accept} we show the evolution 
of the local acceptance rates of proposed states for Truth A with small observational noise, as a function of $M$.
The evolutions are similar for the other datasets and so are not presented for 
brevity.The parameter $\beta$, the
proposal variance, is chosen so that the acceptance probability
is neither close to one nor zero; recall that this results
in an order of magnitude smaller value of $\beta$ for the phase-field
method in comparison with level set, meaning that the former method makes
a much slower exploration of the posterior distribution.
\cref{f:accept} suggests that the phase-field chains have not reached equilibrium until after at least $5\times 10^5$ samples, whereas the level set chains converge much earlier. This is illustrated in \cref{f:samples}, which shows a selection of samples for $M=\mathcal{O}(10^4)$ for Truth B with small observational noise. With $M=10^4$ samples, the three inclusions have already been identified by the level set chain, however after $M=3 \times 10^4$
samples the phase-field chain has only started to identify a second inclusion. Thus, even though for both models we produced the same number of samples, it would have sufficed to terminate the level set chains much earlier, significantly reducing the computational cost.

The fact
that the acceptance rates for the phase-field chains are lower than those for the level set chains, despite the proposal standard deviation parameter $\beta$ being one tenth of the size can be understood as
follows. Note that the measure $\nu^y$ can 
informally be thought of as having Lebesgue density 
proportional to $\exp(-J^\ep(u)) = \exp(-\ep^{-2a_1-3}I^{\ep}(u))$. Thus for small $\ep$ the probability mass is concentrated in a small neighborhood of critical points of 
$I^\ep \approx I^{0}_{\delta}$. The MCMC simulations for $\nu^y$ could hence be viewed as a 
form of derivative-free optimization for the functional $J^\ep$.

\begin{figure}
\begin{center}
\includegraphics[width=\textwidth,trim=1cm 0cm 0cm 1cm, clip]{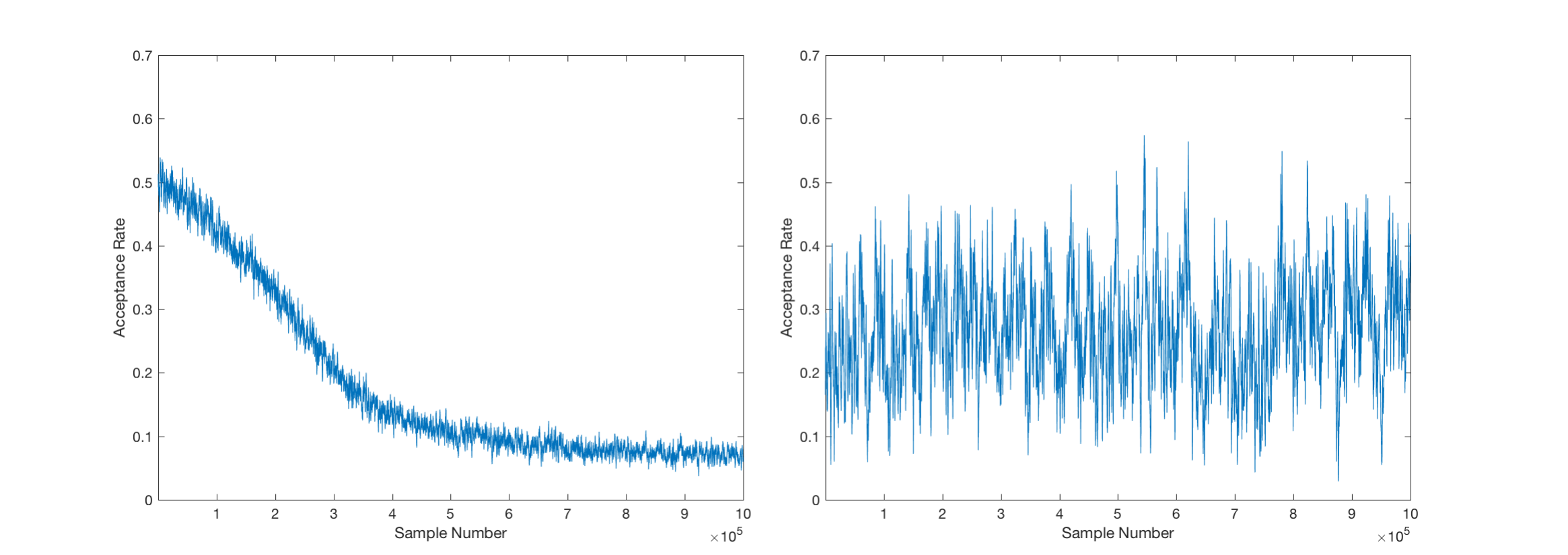}
\end{center}
\caption{The evolution of the acceptance rate of proposals for the phase-field (left) and level 
set (right) MCMC chains, for Truth A with small observational noise. 
Acceptance rates are calculated over a moving window of $1000$ samples.}
\label{f:accept}
\end{figure}

\begin{figure}
\begin{center}
\includegraphics[width=\textwidth,trim=2cm 0cm 0cm 1cm, clip]{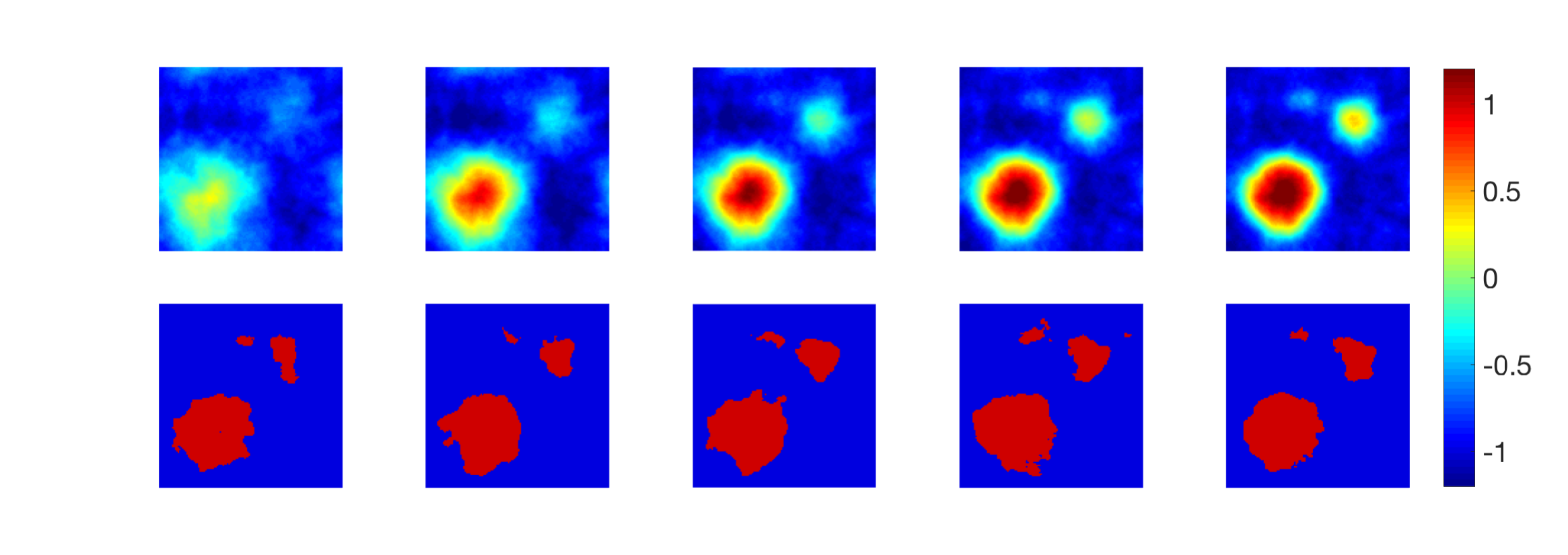}
\end{center}
\caption{Examples of samples near the start of chains for Truth B with small observational noise. 
Sample numbers 10000, 20000, 30000, 40000 and 50000 are shown from left-to-right for the 
phase-field chain (top) and the level set chain (bottom).}
\label{f:samples}
\end{figure}

\subsection{Reconstruction Of The Means}

\subsubsection{Small Observational Noise}
\label{ssec:smallnoise}
In \cref{f:means32} we present sample means associated with small-noise observations for 
the phase-field, level set and GP regression models, both with and without thresholding by $S$. 
Note that the phase-field and GP regression models attempt to fit the un-thresholded field to the 
data points, whereas the level set method attempts to fit the thresholded field; the un-thresholded field 
for the level set method is hence on a different scale to the other two models. 

For Truth A and Truth B, the general quality of the reconstruction is similar for 
all three models after thresholding, though the level set method does not overfit to the 
datapoints as significantly as the other two methods; this
overfitting for the phase-field and GP regression is
manifest in a boundary for the largest inclusion in Truth B which has variations
on the scale of the observational noise. Another noticeable effect
in the quality of the phase-field and GP regression, manifest
in Truth A, is that the edges of the circular inclusion are rendered
approximately piecewise linear; this might be ameliorated by using
a small mesh increment to $\ep$ ratio. The level set method has
no small length scale to resolve, and hence does not suffer from this effect.

For all three models reconstruction of Truth C is fairly inaccurate 
as the sparse observation network does not resolve the length scale on
which the true field varies. The level set and GP regression models perform similarly, whereas 
the phase-field model places much more mass into the positive class; it is likely that
this reflects a lack of convergence of the Markov chain for the phase-field
model.

\begin{figure}
\begin{center}
\includegraphics[width=.9\textwidth,trim=4cm 0.5cm 3.5cm 1cm, clip]{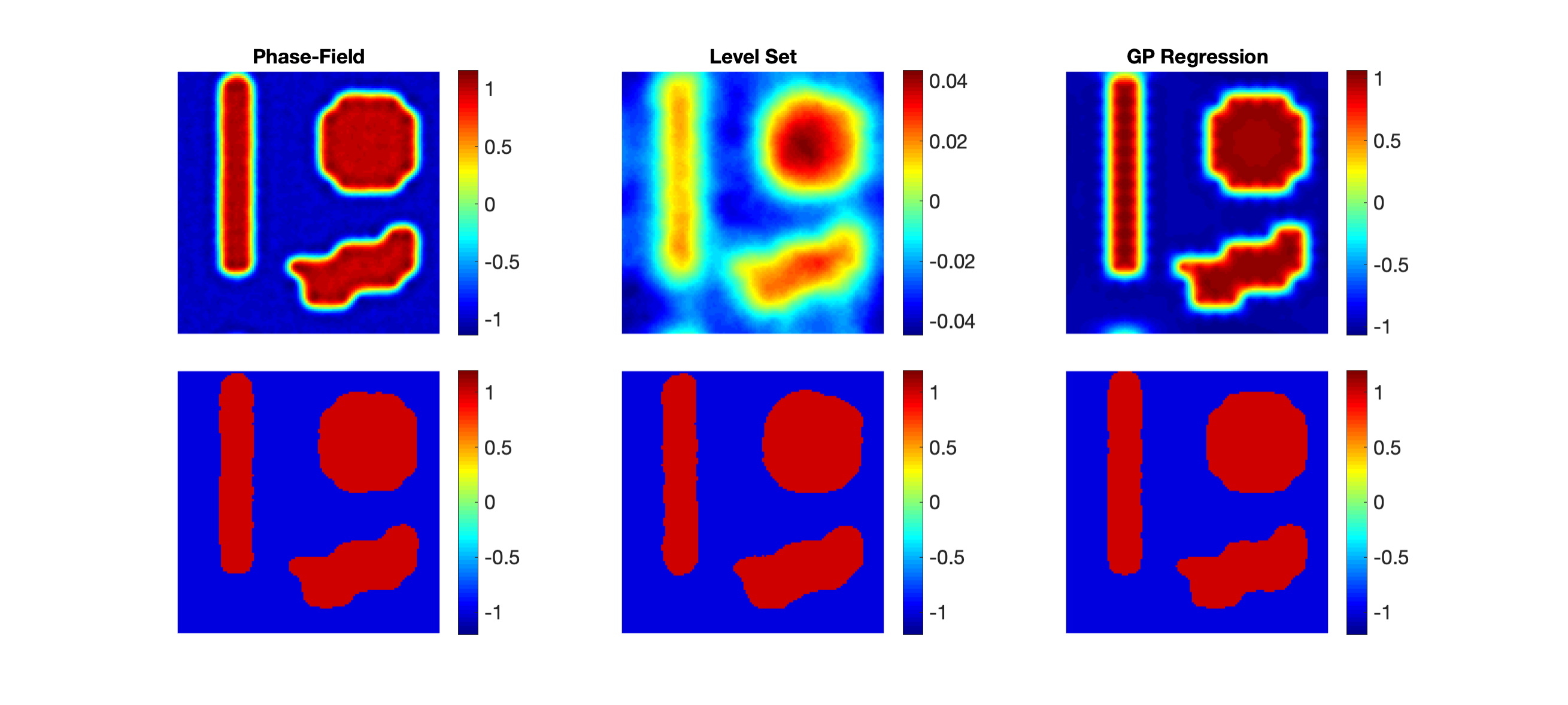}\\
\includegraphics[width=.9\textwidth,trim=4cm 0.5cm 3.5cm 1cm, clip]{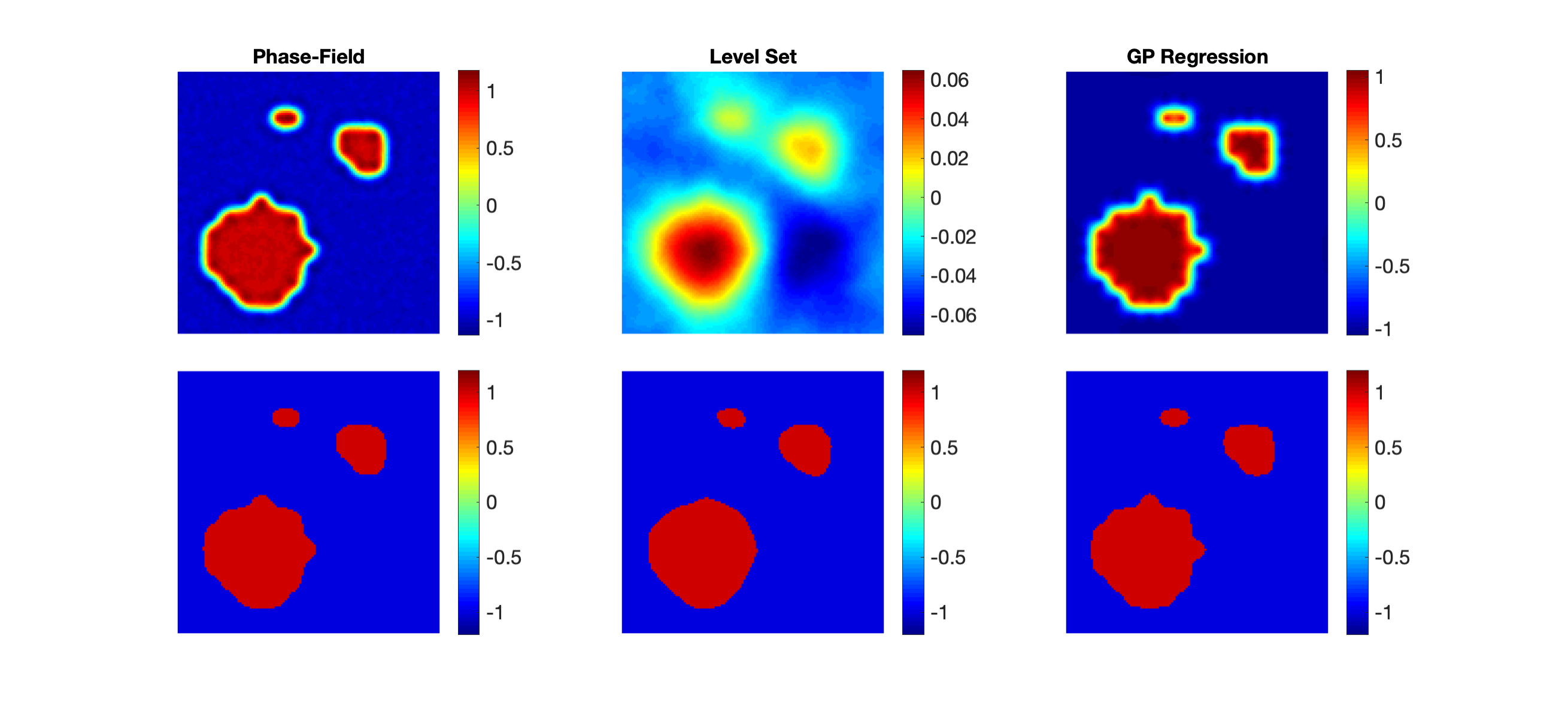}\\
\includegraphics[width=.9\textwidth,trim=4cm 0.5cm 3.5cm 1cm, clip]{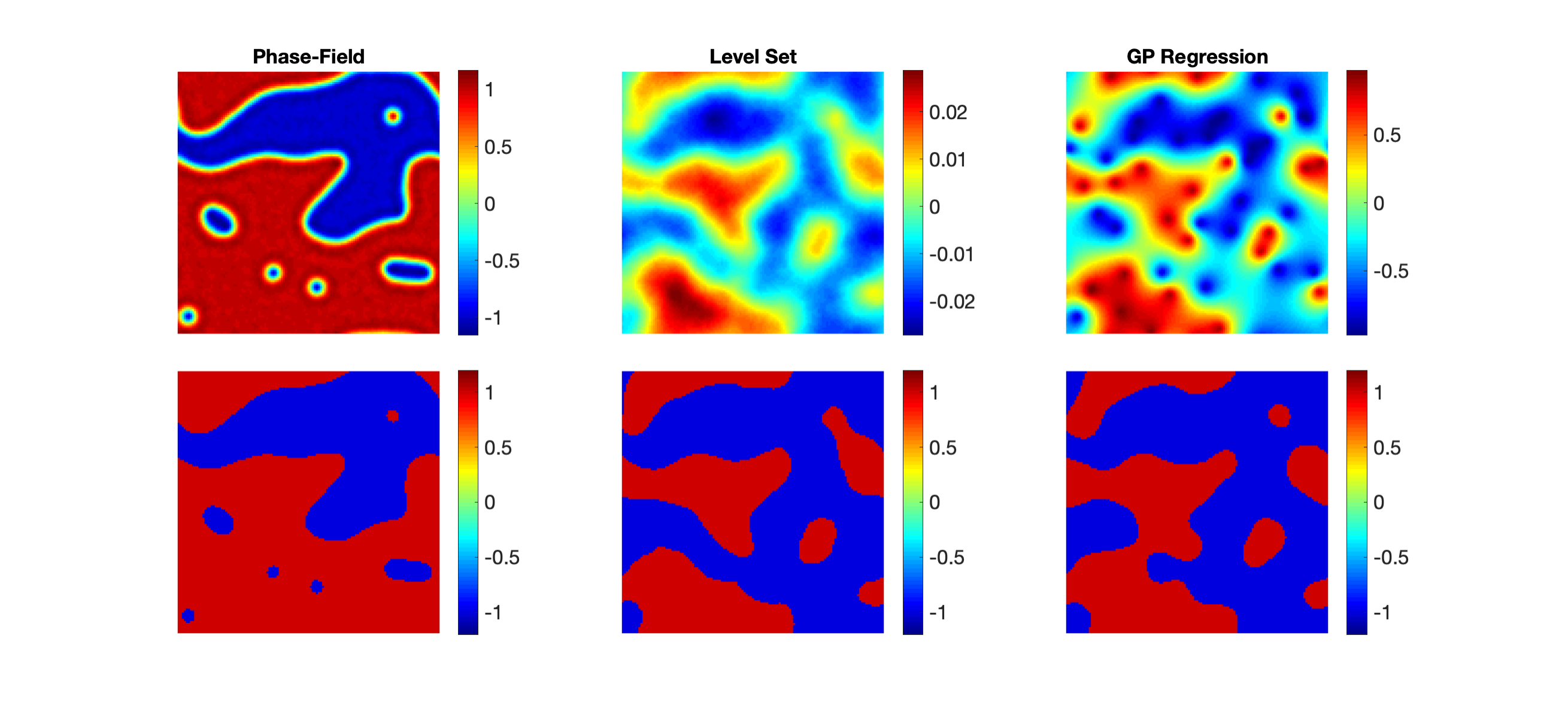}
\end{center}
\caption{Sample means for Truth A (top block), Truth B (middle block) and Truth C (bottom block) with 
small observational noise. To top row of each block shows Monte Carlo approximations 
to $\mathbb{E}^{\nu^y}(v)$, the underlying continuous fields, and the bottom row in each block 
shows Monte Carlo approximations to $S\big(\mathbb{E}^{\nu^y}(v)\big)$, the thresholded fields.}
\label{f:means32}
\end{figure}

\begin{figure}
\begin{center}
\includegraphics[width=.9\textwidth,trim=4cm 0.5cm 3.5cm 1cm, clip]{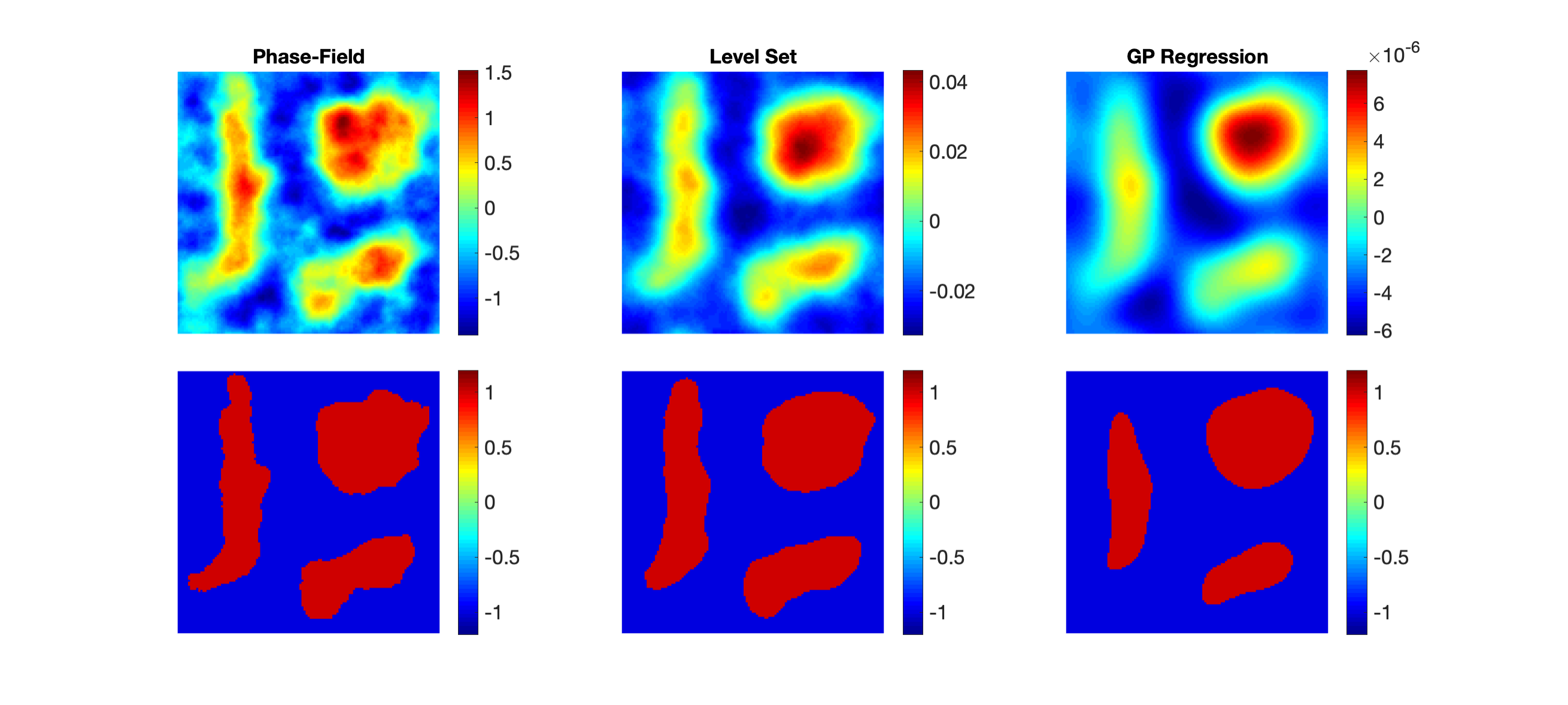}\\
\includegraphics[width=.9\textwidth,trim=4cm 0.5cm 3.5cm 1cm, clip]{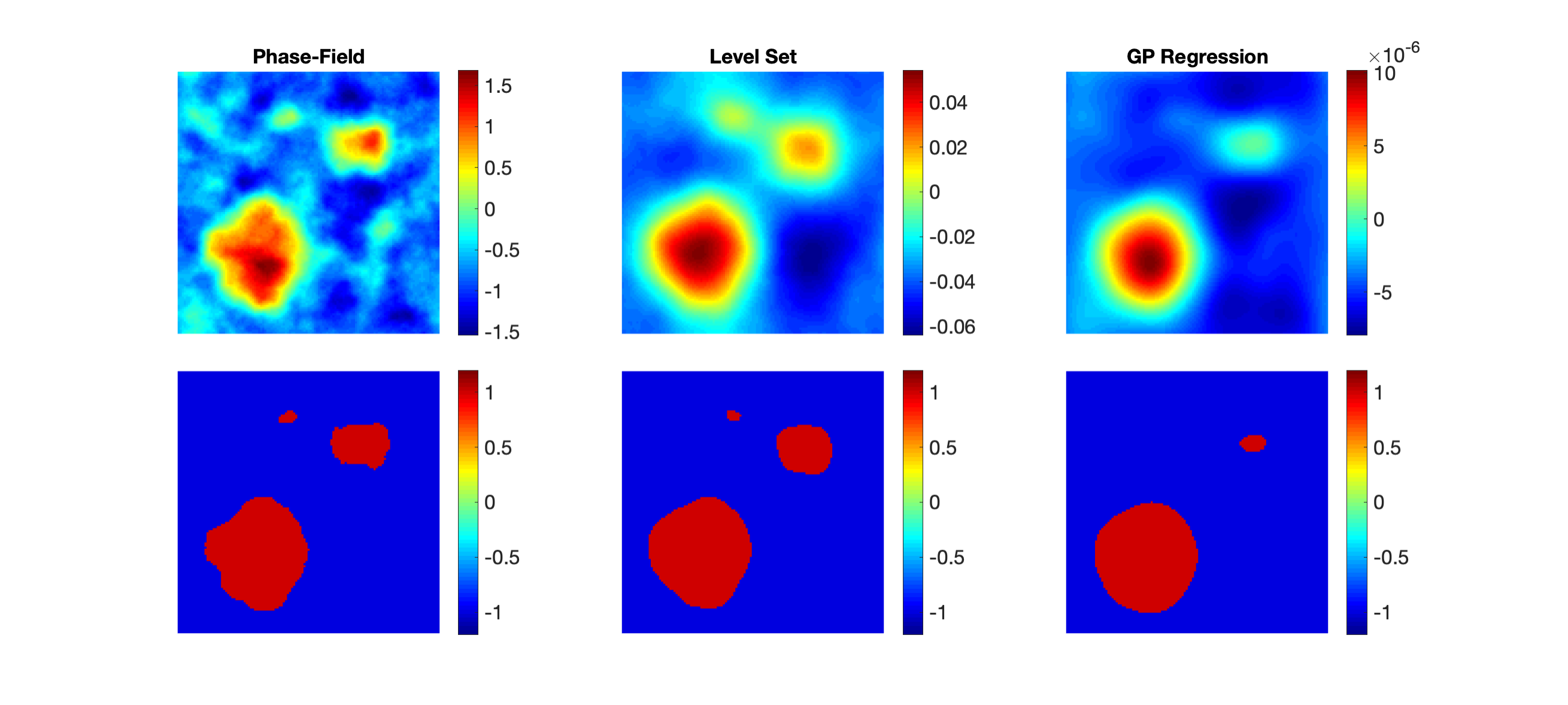}\\
\includegraphics[width=.9\textwidth,trim=4cm 0.5cm 3.5cm 1cm, clip]{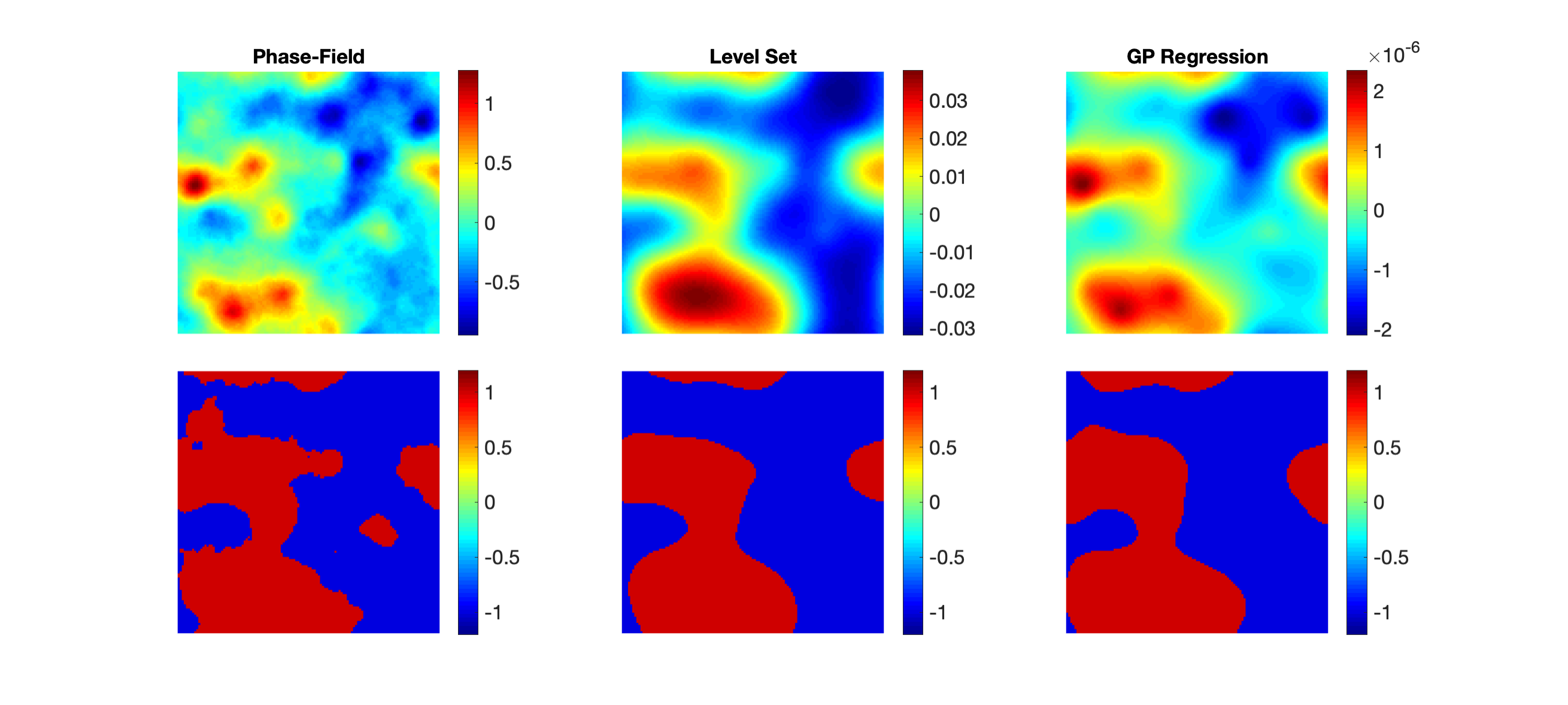}
\end{center}
\caption{Sample means for Truth A (top block), Truth B (middle block) and Truth C (bottom block) 
with order one observational noise. To top row of each block shows Monte Carlo approximations 
to $\mathbb{E}^{\nu^y}(v)$, the underlying continuous fields, and the bottom row in each block 
shows Monte Carlo approximations to $S\big(\mathbb{E}^{\nu^y}(v)\big)$, the thresholded fields.}
\label{f:means0}
\end{figure}

\subsubsection{Order One Noise Reconstructions}

In \cref{f:means0} the sample means associated with order one observational noise are shown. 
As would be expected, reconstruction quality is generally poorer than for the small-noise 
observations, though overfitting to the observational 
noise is no longer an issue for the phase-field and GP 
approaches. The three models perform similarly, though  there seems to be an increased amount of 
penalization on the length of the interface from left-to-right. Without thresholding, 
the GP regression means provide poor estimates of
the truth in terms of scale, due to the far weaker influence of the likelihood and 
lack of prior information enforcing values close to $\pm 1$.

\subsection{Perimeter Learning}
\label{sssec:pp}
Here we study perimeter learning for the Bayesian level set method. The length of the zero level set of $\eta^N$ may then be approximated by using the discrete variation
of $w^N:=S(\eta^N)$,
\[
\ell(N) = \frac{1}{2N^2}\sum_{i,j=1}^N {|\mathsf{D}^N w^N(x_i,y_j)|} \approx \frac{1}{2}\int_D |\nabla w^N(x,y)|\,
\]
where the operator $\mathsf{D}^N$ approximates the gradient on the grid $\{x_i,y_j\}$ via central differences.
Using this we investigate numerically
the length of the level sets of these samples. We have shown in
\cref{lem:per} that choosing $\alpha>1+d/2$ is sufficient
to ensure almost sure finite length of level sets. Numerical
experiments   indicate that this is a sharp result. In 
 \cref{f:interfaces} interface lengths for prior distributions approximated as described above for a 
 single realization of  $\{\xi_k\}$ in \cref{eq:kl_sum} are displayed as a function of $N$ for varying values of $\alpha$.   
  We use $d=2$  and observe that for $\alpha < 2$ the length of the interface diverges algebraically with $N$ (left hand panel), 
for $\alpha = 2$ it diverges logarithmically (right hand panel shows 
this best), and for $\alpha > 2$ it converges to a constant (both left
and right hand panels show this). 
The results, then, suggest that level sets have finite length almost surely
if and only if $\alpha>1+d/2.$

\begin{figure}
\begin{center}
\includegraphics[width=0.49\textwidth,trim=1cm 0cm 0cm 1cm, clip]{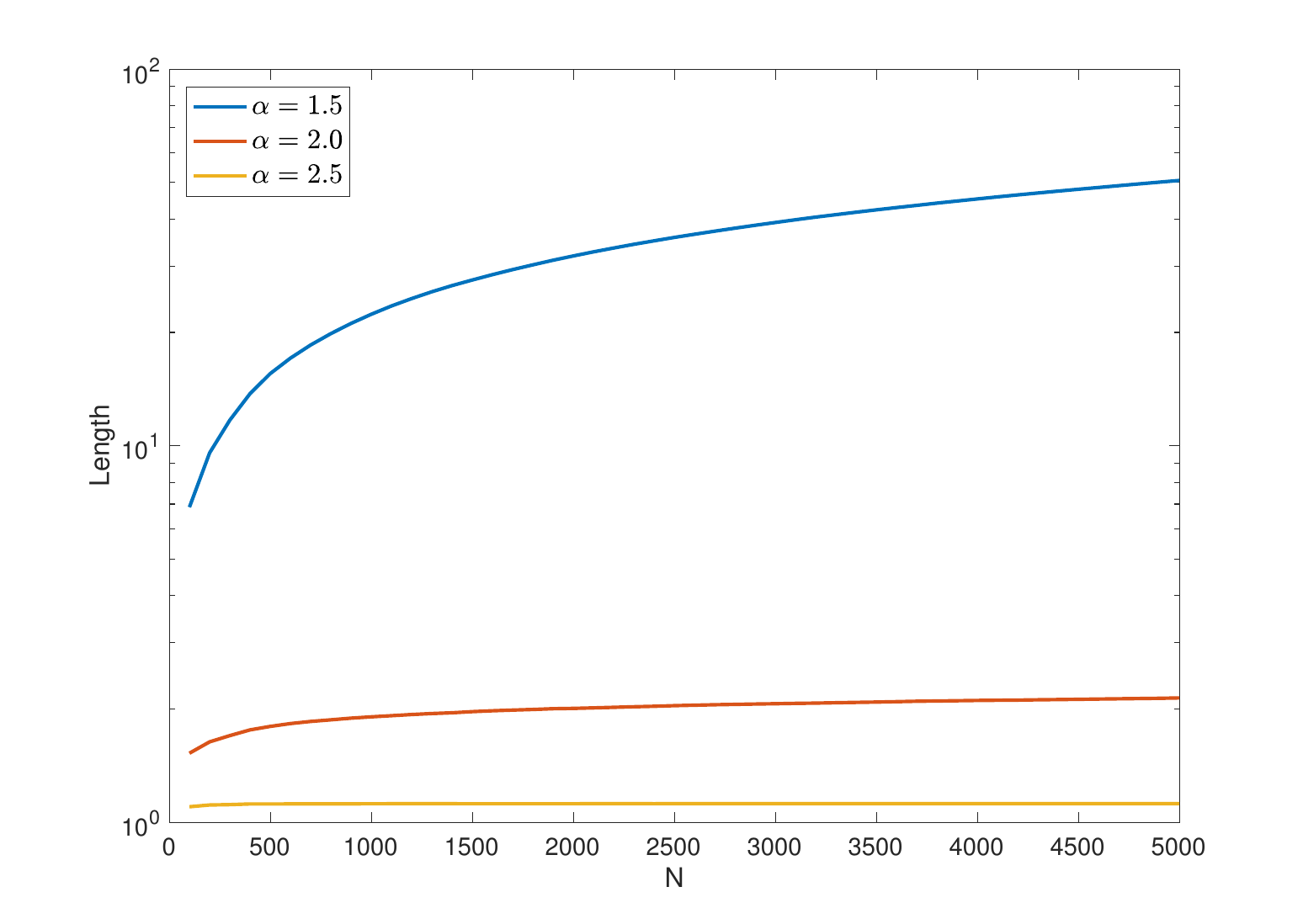}
\includegraphics[width=0.49\textwidth,trim=1cm 0cm 0cm 1cm, clip]{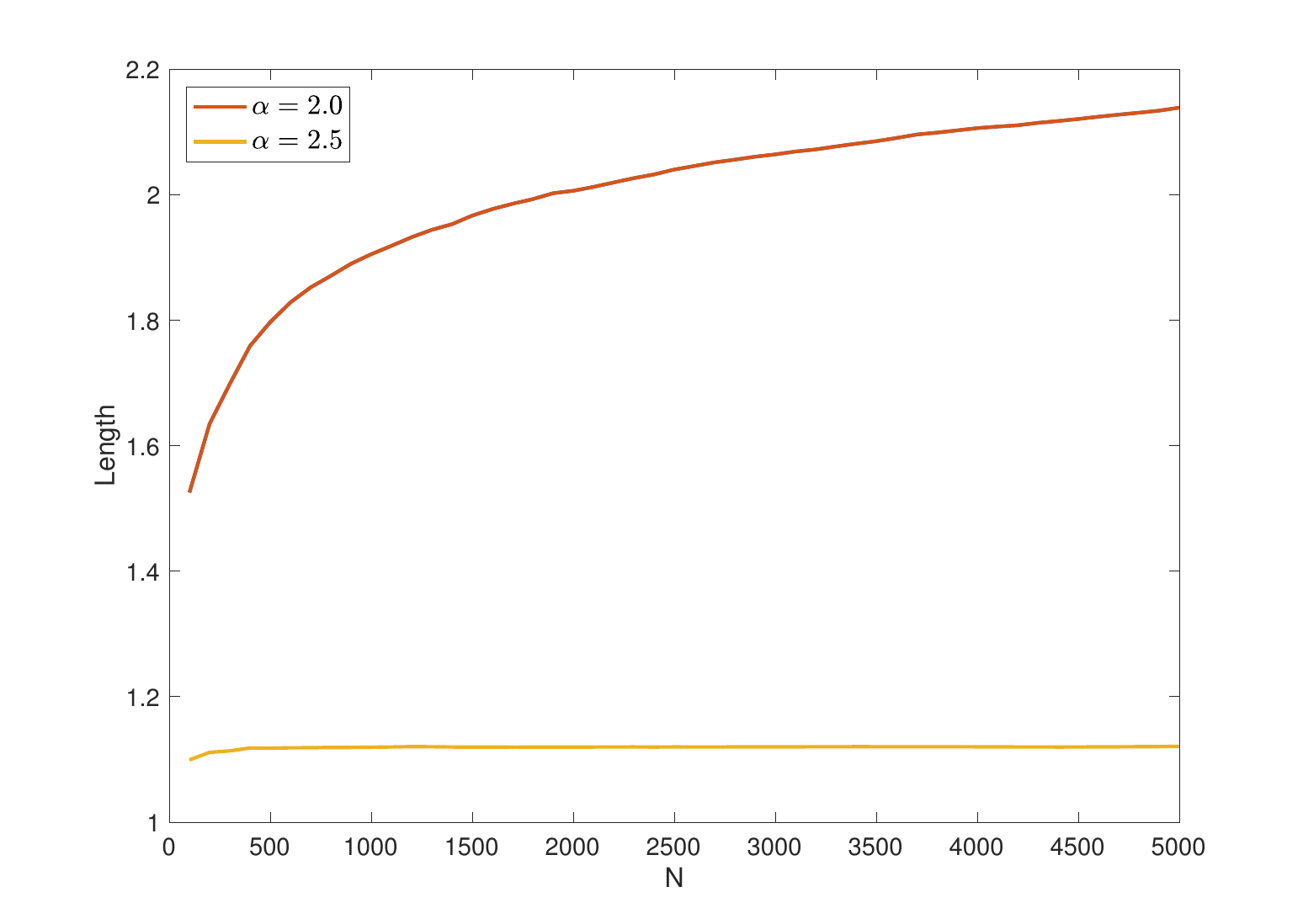}
\caption{The dependence of the length of the zero level set of a Gaussian
sample, as a function of numerical approximation level $N$ and of 
prior regularity parameter $\alpha$. (Left) logarithmic axis, (right) linear axis.}
\end{center}
\label{f:interfaces}
\end{figure}

In order to compare the perimeter distribution between the prior and posterior, 
the choice $\alpha > 1+d/2$  ensures that  the
length of the zero level set 
is well-defined so in two   dimensions we take   $\alpha=3$. The results for recovery of Truth B
are shown in \cref{f:interfaces_dist}.
Whilst the perimeter still retains some variation under the posterior, the variation
is much lower and, in contrast to the prior, it is concentrated 
around the true value of the perimeter. We see that though the Bayesian level set approach does not explicitly penalize the perimeter, it has the ability to estimate the perimeter, and quantify uncertainty in the estimation.

\begin{figure}
\begin{center}
\includegraphics[width=0.6\textwidth,trim=1cm 0cm 0cm 0cm, clip]{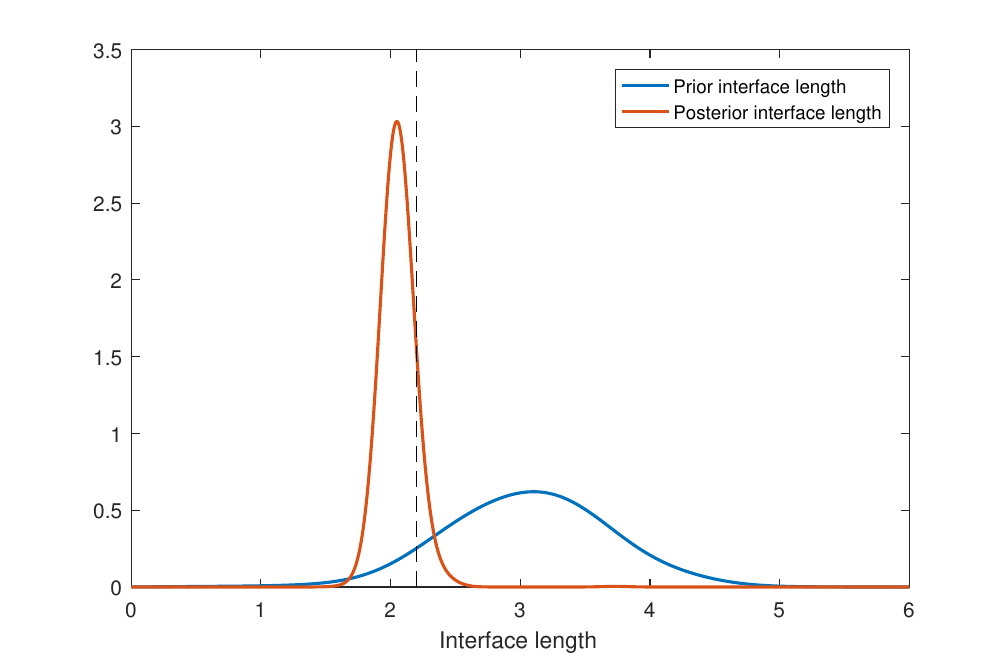}
\caption{The distribution of the perimeter under the prior and posterior 
distribution. The vertical dashed line indicates the perimeter of the true field.}
\end{center}
\label{f:interfaces_dist}
\end{figure}

\section{Eikonal Equation} \label{sec:eik}
In this section we build on what we have learned so far for the
linear inverse problem defined by \eqref{eq:basiceq} and use it to  
study a nonlinear inverse problem from the eikonal equation.
Gaussian process regression is fast to implement, and appears to give
qualitatively comparable accuracy to the Bayesian level set method; but
it does not generalize to nonlinear problems and so we do not consider it 
further. The experiments in the previous section, set-up in \cref{ssec:linip},
suggest that the Bayesian level set approach to binary recovery 
has two advantages over the Bayesian phase-field formulation, 
for the linear inverse problem considered: the level set method is faster
and draws from the posterior contain information about the true
perimeter. 
Thus we focus attention purely on the Bayesian level set method. Within
this context we also demonstrate the benefits of hierarchical Bayesian
inversion.

\subsection{The Forward Equation} \label{sec:eikfp}
Let $x_0\in D$ be a wave emitting source, and define the first arrival time of the wave at $x\in D$ as $T(x)$. 
The wave passes through a medium which adjusts the wave speed according to a
scalar function $u\colon D\to \bbR_+$ known as the \emph{slowness}.
It is shown in \cite{book:Lio82,Son86} that $T(x)$ may be viewed as solution
of a stationary Hamilton-Jacobi equation, namely the following 
eikonal equation:
\begin{eqnarray}
 |\nabla T (x)| = u(x),& \qquad&\forall x \in D \setminus \{x_0\},\label{eq:eik}\\
 T(x_0)=0,& &\label{eq:ptconstr}\\
 \nabla T(y) \cdot n(y) \geq 0,& & \forall y \in \p D,\label{eq:sonerbc}
\end{eqnarray}
where $n$ is the outward pointing unit normal. The Soner boundary condition \eqref{eq:sonerbc} ensures ray paths terminate at $\p D$ \cite{Son86}. The recovery of the slowness function $u$ from observations of arrival times is known as first arrival traveltime tomography. Extensive discussion of the well-posedness of the forward problem can be found in \cite{DecEll04,DecEllSty11,DunEll19}. For our application, we consider a binary slowness function $u\colon D \to \{u_{\min},u_{\max}\}$ where $0<u_{\min}\leq u_{\max}$.

We define the solution map $\mmm[G]$ mapping the slowness $u$ to the 
travel times $T$ via solution of the eikonal equation with source $x_0$.
Because we are interested in binary slowness functions we also introduce
\begin{equation}
\mmm[S](v)=S(v)\cdot(u_{\max} - u_{\min})/2 +(u_{\max} + u_{\min})/2;
\end{equation}
here $S(v)$, is the sign function defined in \cref{sec:B}.

To solve the forward problem we first discretize using an upwind finite difference scheme. We then use a fast marching procedure (see \cite{Set99}) to solve the discrete eikonal equation. A formulation of the discretization and marching algorithm, along with a  proof of numerical convergence is found in \cite{DecEllSty11}.

\subsection{The Bayesian Inverse Problem} \label{sec:eikip}
Let $\eta\sim \sN(0,\Sigma)$ be a normal random variable in $\bbR^J$. Fix $x_0\in \bar D$.
Defining the observation map $K$ taking traveltimes to $\bbR^J$ we
define the inverse problem of finding $v$, given observed data $y$
satisfying 
\begin{equation}\label{eq:absinv}
 y = K\circ \mmm[G]\circ \mmm[S](v) +\ep^c\eta = K\circ \mmm[G](u) + \ep^c\eta.
\end{equation}
We will assume that $K$ is defined so that
the data $y$ is a set of observed first hitting times at fixed known receiver locations $\{z_j\}_{j=1}^J \in \bar D$. 
The random variable 
$y|v \sim \sN(K\circ\mmm[G]\circ \mmm[S](v),\ep^{2c}\Sigma)$, 
leading to negative log likelihood defined, up to an additive constant, by
\[
\Phi(v;y)= \frac{1}{2\ep^{2c}}\left| \Sigma^{-\frac{1}{2}}\big(y-K\circ\mmm[G]\circ \mmm[S](v)\big)\right|^2.
\]
We will treat problems of multiple sources $\{x_0^m\}_{m=0}^M$ as multiple experiments, with solution maps $\mmm[G]^m$ each producing data $y^m\in \bbR^J$. The natural extension is to consider the following negative log likelihood,
\begin{equation}
\label{eq:needl}
\Phi(v;y) = \frac{1}{2\ep^{2c}}\sum_{m=1}^M\left| \Sigma^{-\frac{1}{2}}\big(y^m-K\circ\mmm[G]^m\circ \mmm[S](v)\big)\right|^2, 
\end{equation}
for the $m$ observations $y = (y^1,\dots,y^m).$
We assume that we are given prior measure
$\zeta_0=\sN\bigl(0,\sC\bigr)$ and let $\zeta^y(\dee v)$
denote the probability distribution
of the conditioned random variable $v|y$. Then using Bayes' theorem (see \cref{prop:12}), we deduce that
$\zeta^y$ is a probability measure supported on continuous functions,
determined by
\[
\frac{\dd \zeta^{y}}{\dd \zeta_0} = \frac{1}{Z}\exp\Big(-\frac{1}{2\ep^{2c}}\sum_{m=1}^M\Big| \Sigma^{-\frac{1}{2}}\big(y^m-K\circ\mmm[G]^m\circ \mmm[S](v)\big)\Big|^2\Big),
\]
with normalization constant $Z$.
This problem has been formulated for piecewise constant slowness function with Ginzburg-Landau type regularization in the deterministic setting \cite{DunEll19}, where simulations and proofs of convergence of numerical schemes can be found. We choose here to use a level set formulation for the reasons discussed
at the start of the section.

\subsubsection{Hierarchical Inference For Inverse Lengthscale} \label{ssec:hierbayes}

In hierarchical\linebreak Bayesian inference we employ
a Gaussian prior measure $\zeta_0=\sN(0,\sC(\tau))$ in which
the covariance $\sC$ depends on parameter $\tau>0$ which we
interpret as an additional unknown to be learned during the 
inversion process. In particular we will work with settings
in which $\tau$ has interpretation as an inverse length-scale.
To this end, define the hierarchical prior $\zeta_0$ by decomposing as follows:
\[
 \zeta_0(\dee v,\dee \tau) = \zeta_0(\dee v|\tau) \pi_0(\tau) \dd \tau.
\]
We call $\pi_0(\tau)$ the hyperprior. Generalizing the 
derivation of the posterior in the preceding subsection,
we now find that the distribution
of $v,\tau|y$ is determined by probability measure $\zeta^y(\dee v, \dee \tau)$
defined by 
\begin{equation}\label{eq:posthyp0}
 \zeta^y(\dee v, \dee \tau) = \frac{1}{Z} \exp(-\Phi(v;y))\zeta_0(\dee v|\tau) \pi_0 (\tau)\dd \tau,
\end{equation}
for normalization constant $Z$.
For reasons discussed in \cite{papaspiliopoulos2007general,yu2011center} 
it can be advantageous to reparameterise the hierarchical inverse problem 
in terms of $(\xi,\tau)$, rather than $(v,\tau)$, where $\xi$ is a 
Gaussian white noise distributed as $\sN(0,I),$ so that $v=\sqrt{\sC(\tau)}\xi;$
the underlying latent Gaussian white noise $\xi$ may be identified with
the collection of i.i.d. unit Gaussians $\{\xi_k\}$ used
to construct prior samples in \cref{ssec:GF}.
Abusing notation we may write the posterior distribution $\zeta^y$, now
for the variable $(\xi,\tau)$, as
\begin{equation}\label{eq:posthyp}
 \zeta^y (\dee \xi,\dee \tau) = \frac{1}{Z} \exp(-\Phi(\sqrt{\sC(\tau)}\xi;y))\zeta_0(\dee \xi) \pi_0 (\tau)\dd \tau.
\end{equation}
Working with variables $(v,\tau)$ as in \eqref{eq:posthyp0} is refered to
as the \emph{centred} problem formulation; using variables $(\xi,\tau)$ 
as in \eqref{eq:posthyp} is refered to as the \emph{non-centred} 
problem formulation.

Numerical evidence described in \cite{DunIglStu17,CheDunPapStu19_preprint}
demonstrates that for level-set based inverse problems use of the
non-centered formulation  in \eqref{eq:posthyp} confers considerable
advantages in terms of speed of convergence of MCMC. We thus employ
the non-centered formulation.
To sample from this distribution we use \cite[Algorithm 6.1]{CheDunPapStu19_preprint}, known as the non-centred pCN-within-Gibbs. This algorithm updates $\xi$ and $\tau$ in separate substeps of a pCN sampling method, and we perform this in a random order each iteration. 

\subsubsection{Hierarchical Inference For Contrast}
We also consider a model hyperparameter that originates in the application (whereas $\tau$ appears when regularizing through the prior). We investigate the contrast between the binary materials, and rewriting the relationship between $u$ and $v$, we obtain
\[
\mmm[S](v)=\frac{\kappa}{2} (S(v)+1) + u_{\min},
\]
where $\kappa=(u_{\max} - u_{\min})>0$ is the contrast. Knowing the parameter $u_{\min}$, we consider the parameter $\kappa$ as a positive random variable to be learnt from the data. This increases flexibility of techniques in application as we can apply our methods to scenarios where slowness contrast is uncertain. We may include this in a non-centred pCN-within-Gibbs algorithm as above, 
by exploiting the form 
$$\mmm[S](v)=\frac{\kappa}{2} (S(\sqrt{\sC(\tau)}\xi)+1) + u_{\min},$$ 
substituting this into the likelihood \eqref{eq:needl}. 
We update each of $\xi,\tau,\kappa$ separately and in random order 
in each iteration.

\subsubsection{Numerical Results: Lengthscale Hyperparameter Only} 

\begin{figure}
\begin{center}
\includegraphics[width=0.48\textwidth]{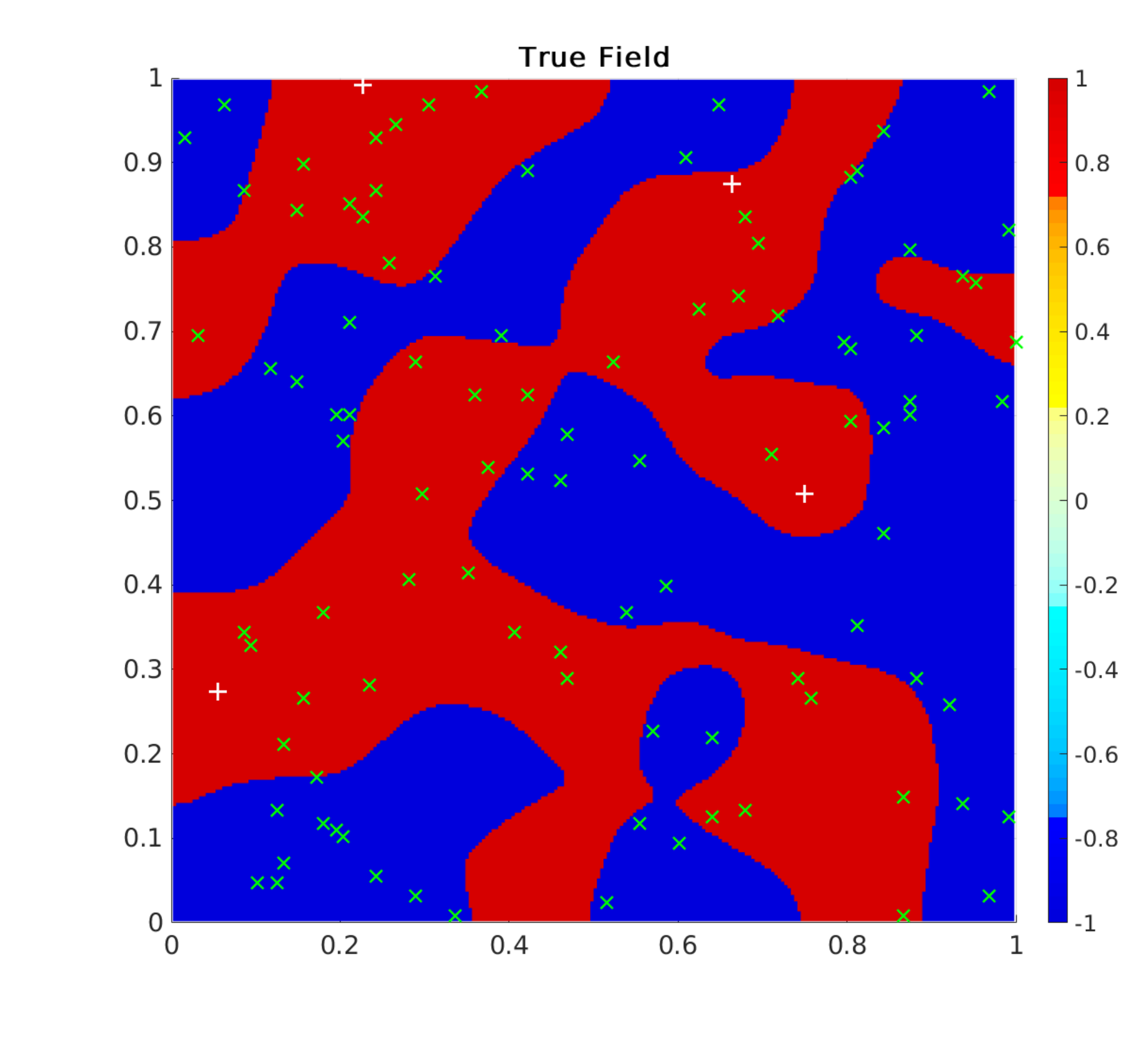}
\includegraphics[width=0.49\textwidth]{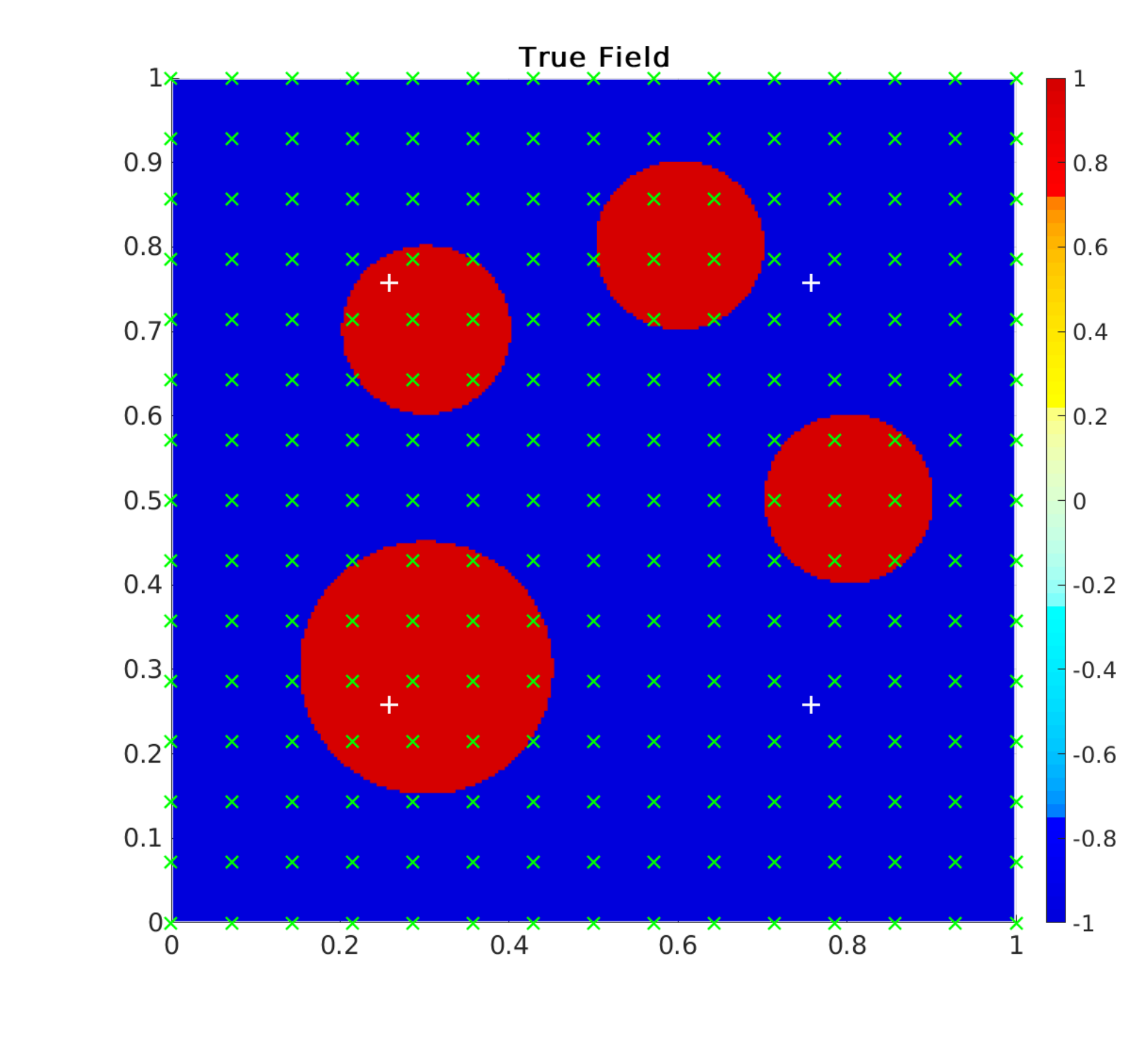}
\end{center}
\caption{The sign of fields $v$, where $\mmm[S](v)$ (for appropriate $u_{\min}$, $u_{\max}$) will be used as a truth for inversion; the field on the left will be referred to as Truth D and the field on the right as Truth E. The sets of observation ($\times$) and source ($+$) points are shown in each figure.}
\label{f:truth_eik}
\end{figure}
For this first test we wish to demonstrate recovery of hyperparameters with the nonlinear eikonal forward model as described in \cref{ssec:hierbayes}. Our domain is given by $\OOmega=[0,1]\times [0,1]$. We choose Truth D as seen in \cref{f:truth_eik}; here the truth has been produced by applying the slowness function $\mmm[S](v^*)$ for an instance $v^*\sim \mu_0(\tau^*)$ with hyperparameter $\tau^*=\exp(6.5)$. We take $u_{\min}=1.0$, and a known contrast $\kappa=0.2$ (thus $u_{\max}=1.2$).

The discretization uses an equidistributed mesh with grid spacing $h=128^{-1}$. To avoid an inverse crime we produce the data on a numerical mesh with spacing $h/2$. We take 4 sources $\{x^m_0\}_{m=1}^4$ and 100 receivers uniformly distributed in the (coarser) discrete domain.     

We work with the parameters same as taken in \cref{ssec:thisone} 
with the exception of setting $c=1$ and, of course, viewing $\tau$
as an unknown. For consistency across different source--receiver combinations we additionally scale the noise by the range of the traveltime observations, so the effective observational noise is $10^{-2}$. We take $\alpha=3$ to ensure finite perimeter.  

For the hyperparameter $\tau$, we choose a lognormal prior to ensure positivity, we take prior $\pi(\log\tau)=\sN(5,2.5)$ and initialize the Markov chain
at $\log(3)$. We use the non-centred pCN-within-Gibbs algorithm  \cite[Algorithm 6.1]{CheDunPapStu19_preprint} described in \cref{ssec:hierbayes} with a random walk Metropolis proposal for the hyperparameter step. The step sizes were chosen to achieve $20\%-25\%$ acceptance rates for $\xi$ and $\tau$.  

The result of the recovery of truth D after $2\times 10^5$ iterations run is displayed in \cref{f:meanslow_eik_rand}. The recovery of the field is fairly 
faithful; we note that information can only be learnt between source and receiver pairs, see Truth D in \cref{f:truth_eik} for their random distribution.  We additionally provide the distribution of the perimeter in \cref{f:interface_eik_rand} with true perimeter marked, and one can see that this falls well within the posterior distribution with high probability mass given to a small
neighborhood of the truth. In \cref{f:loglength_eik_rand} we see the results of learning the hyperparameter distribution. We see the marked truth and the prior and posterior distribution, and again we see the posterior peaks near the true value and gives large mass to a small
neighborhood of the truth. We note that the parameter $\log\tau$ is sampling close to its 
posterior within $10^4$ iterations in this example.

\begin{figure}
\begin{center}
\includegraphics[width=0.49\textwidth]{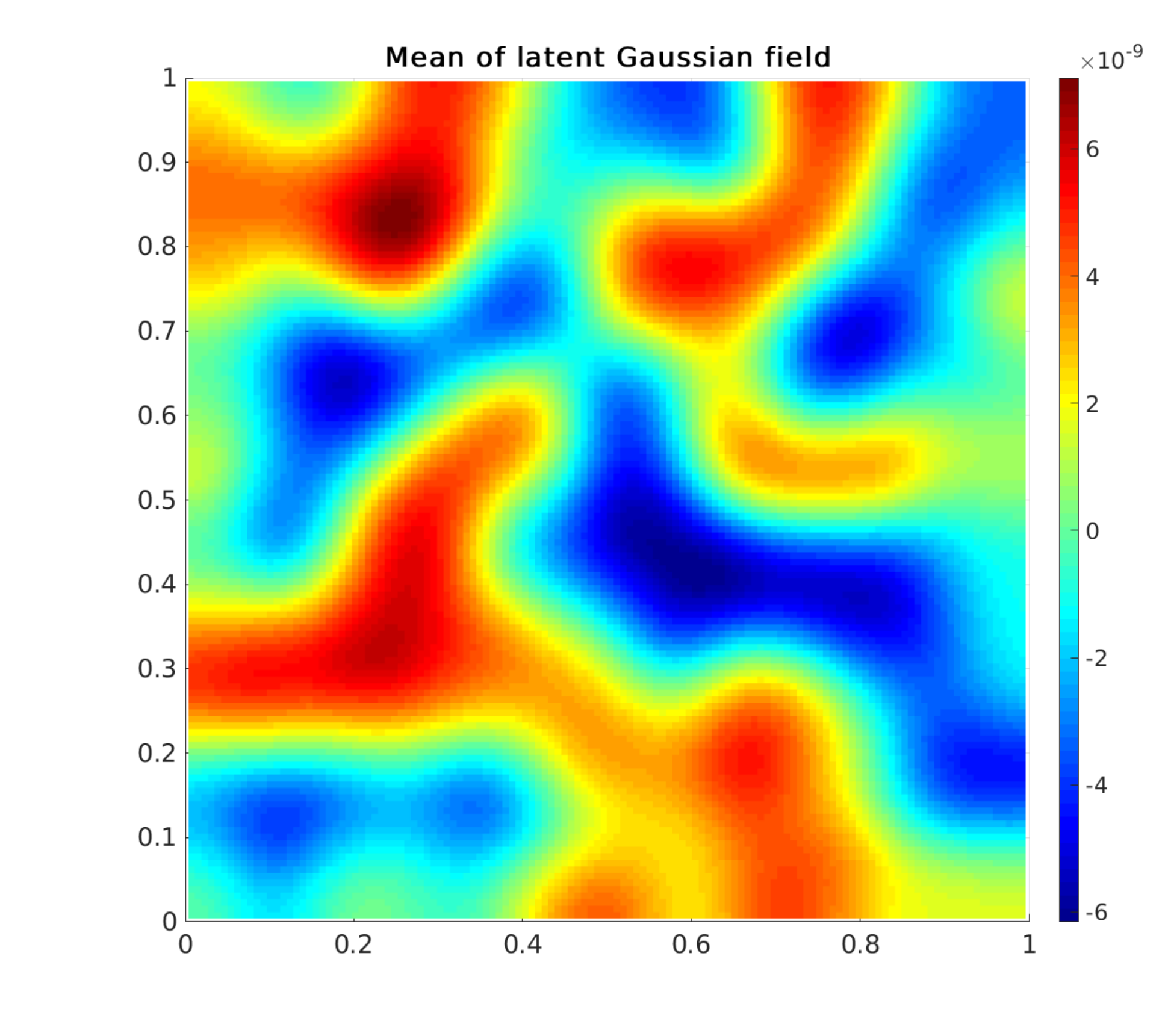}
\includegraphics[width=0.49\textwidth]{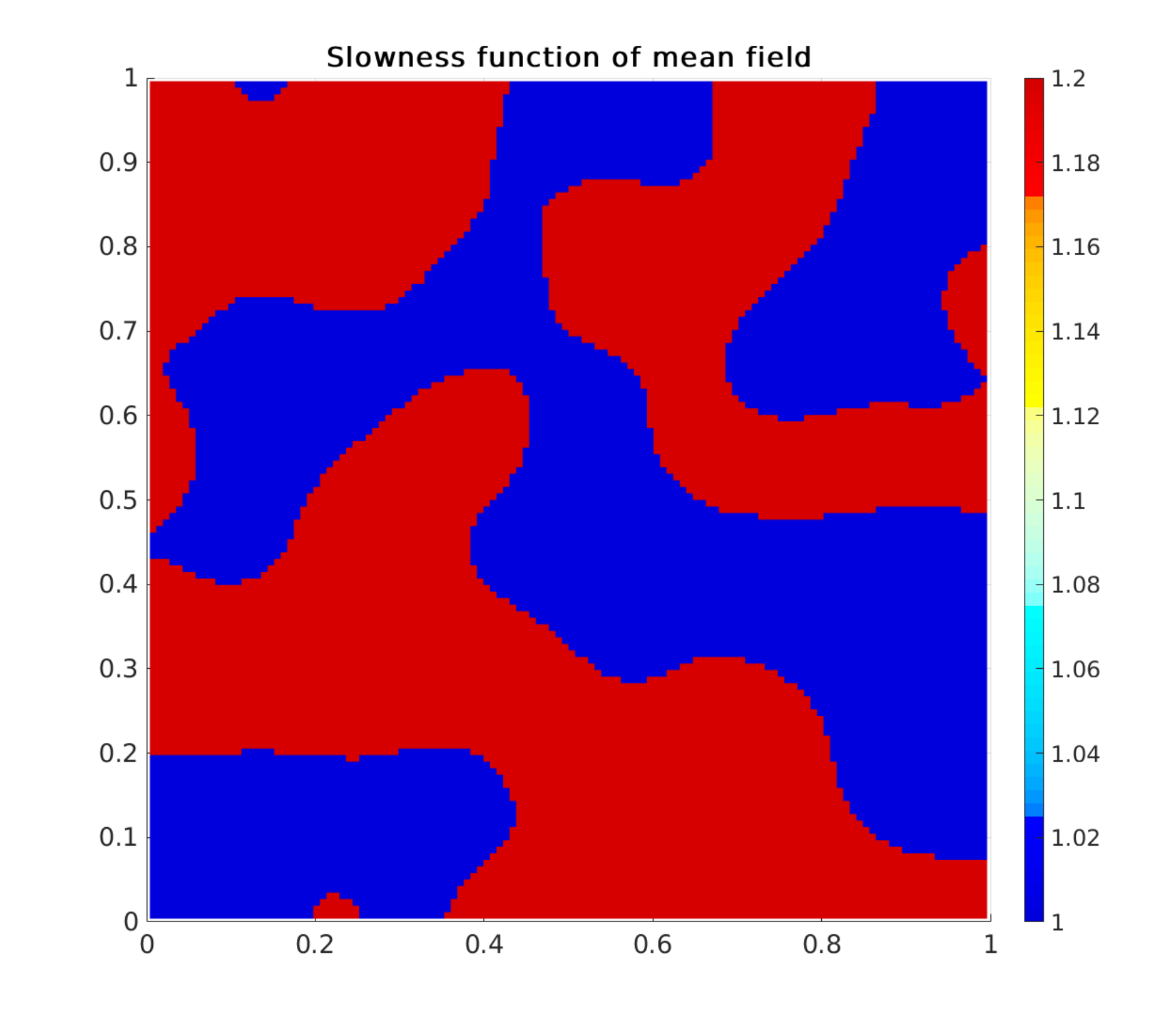}
\end{center}
\caption{Sample means for truth D. The left shows the Monte Carlo approximation of $\mathbb{E}^{\mu^y}(v)$, the underlying continuous field. The right shows the Monte Carlo approximation of $\mmm[S](\mathbb{E}^{\mu^y}(v))$, the thresholded and scaled field.}
\label{f:meanslow_eik_rand}
\end{figure}

\begin{figure}
\begin{center}
\includegraphics[width=0.8\textwidth]{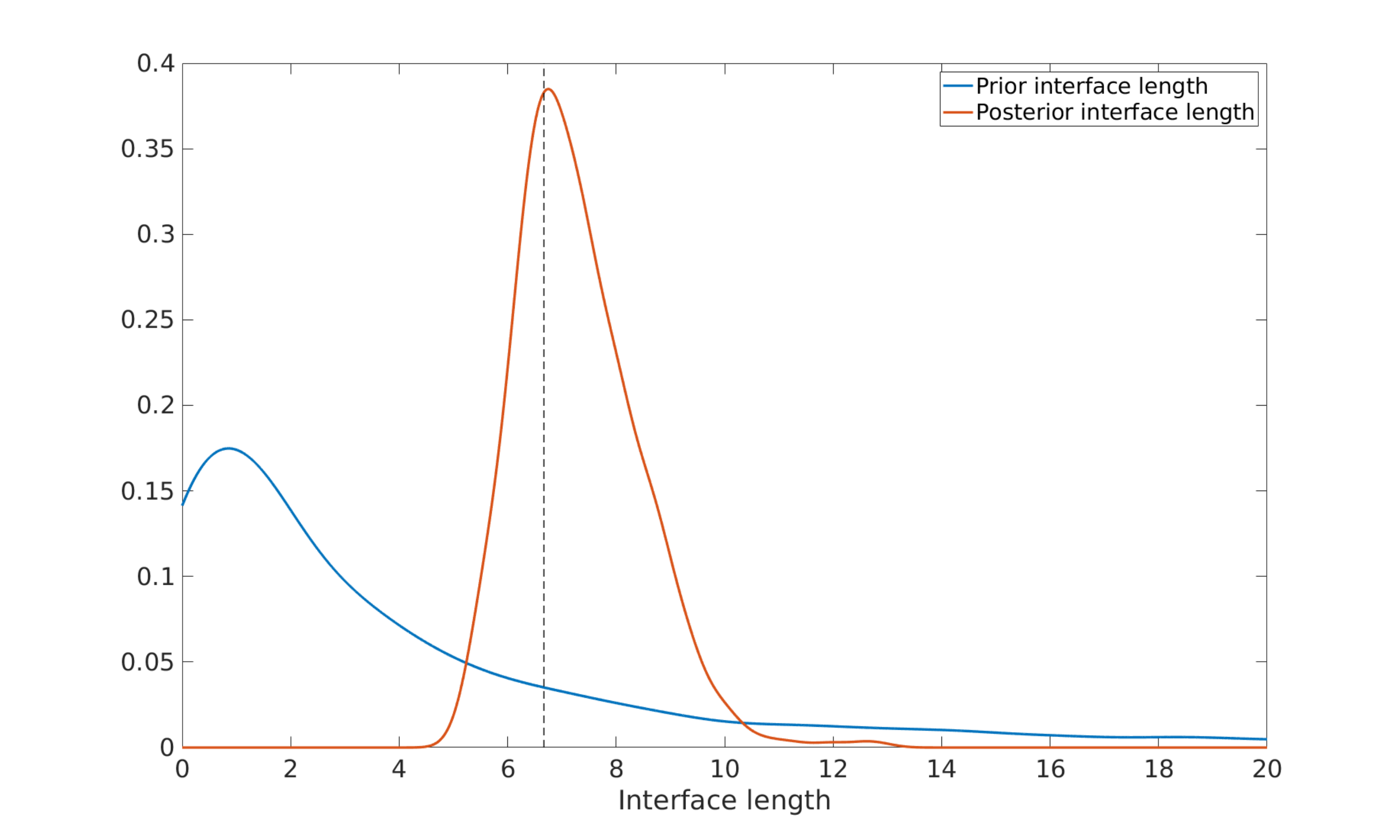}
\end{center}
\caption{The distribution of the perimeter under the prior and posterior distribution for truth D. The vertical dashed line indicates the perimeter of the true field.}
\label{f:interface_eik_rand}
\end{figure}

\begin{figure}
\begin{center}
\includegraphics[width=0.8\textwidth,]{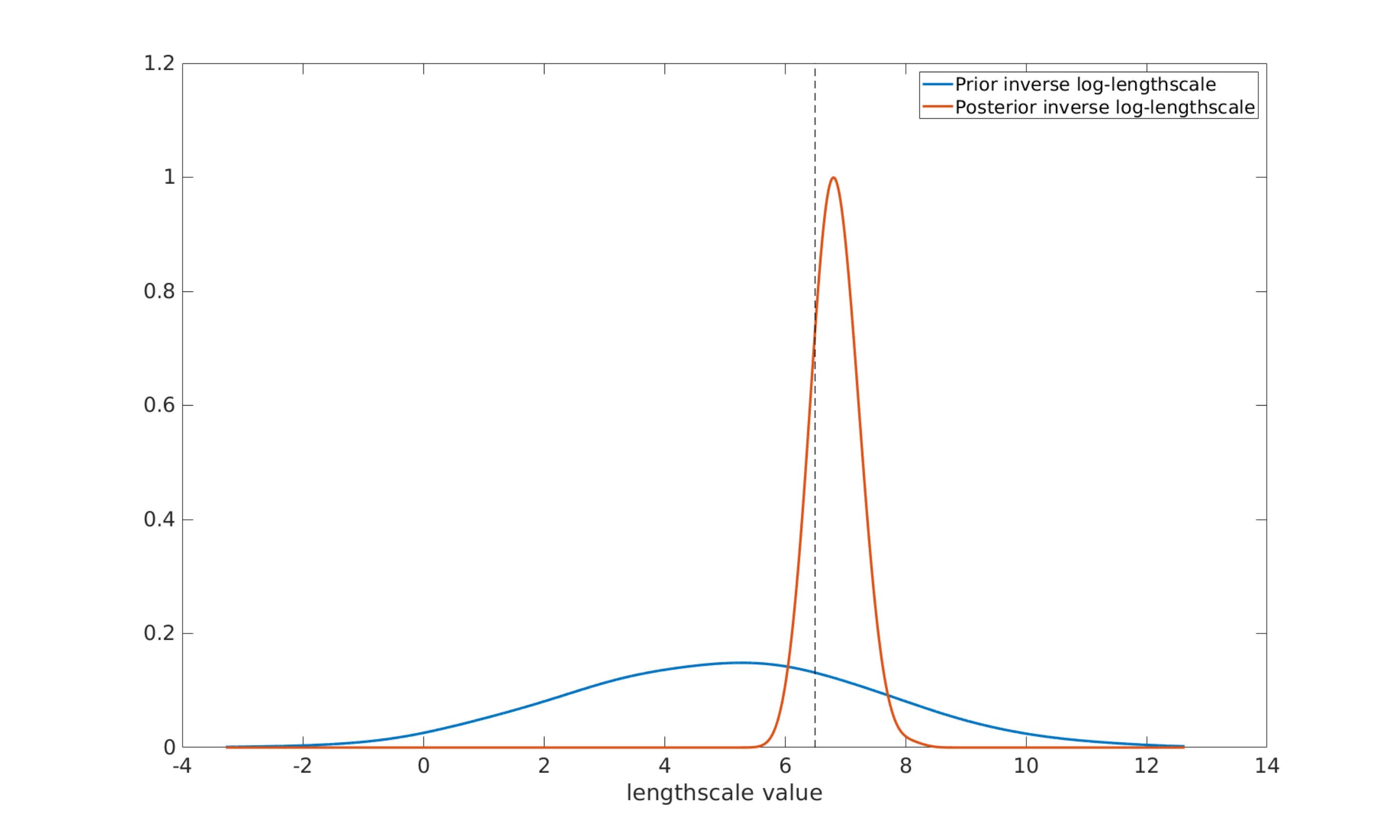}
\end{center}
\caption{The distribution of the logarithm of the hyperparameter $\tau$ under the prior and posterior distribution for truth D. The vertical dashed line indicates the hyperparameter value $\log\tau=6.5$ of the true field.} 
\label{f:loglength_eik_rand}
\end{figure}

\subsection{Numerical Results: Lengthscale And Contrast Hyperparameters}

For this test we demonstrate recovery of the contrast in the
medium. We assume that we are in the situation of performing the inverse eikonal problem where we do not have exact information on the contrast between the binary phases.  We choose Truth E as seen in \cref{f:truth_eik}, 
comprising four circles; three of diameter $0.1$ and one of diameter $0.15$. In the figure we see the choice of four sources, and take $15^2$ equally spaced receivers over the domain. We take $u_{\min}=1.0$, and again we work with the parameters as detailed
in \cref{ssec:thisone} with the exception of setting $c=1$, and 
viewing $\tau$ as unknown and, as in the previous subsection,
we set $\alpha=3$. 
We assume the contrast is lognormal with prior $\pi(\log \kappa)=\sN(\log (0.2), 0.3)$, as we require a positive prior, we also treat $\tau$ as a random unknown quantity, and take a lognormal $\pi(\log \tau)=\sN(5, 2.5)$. We 
initialize the MCMC method to sample these variables at 
$\log\tau=8$ and $\log\kappa=\log(0.1)$. 

Our recovery of Truth E after $2\times 10^5$ iterations is recorded in \cref{f:means_eik_circ}. We see an excellent recovery of the simple geometry, in particular the colorscale shows the approximation of the mean recovered contrast to the true contrast, where $u_{\min}=1,\ u_{\max}=1.2$. This recovery is 
demonstrated in \cref{f:interface_eik_circ} where we see the prior and posterior
densities for the interfacial length. The posterior places most weight in
a small neighbourhood of the truth, and peaks nearby. 

The hyperparameter recovery is displayed in \cref{f:loglen_eik_circ} and \cref{f:logconc_eik_circ}. We see the profiles for prior and posterior for $\log\tau$ in \cref{f:loglen_eik_circ}, demonstrating that the posterior concentrates within a sensible range; note that there is no true $\tau$ for this example.
 In \cref{f:logconc_eik_circ} we see the contrast recovery, and observe that
the posterior places large weight close to the true value. It again takes less 
than $10^4$ iterations for both hyperparameters to draw approximately from
their posterior distributions.

\begin{figure}
\begin{center}
\includegraphics[width=0.49\textwidth]{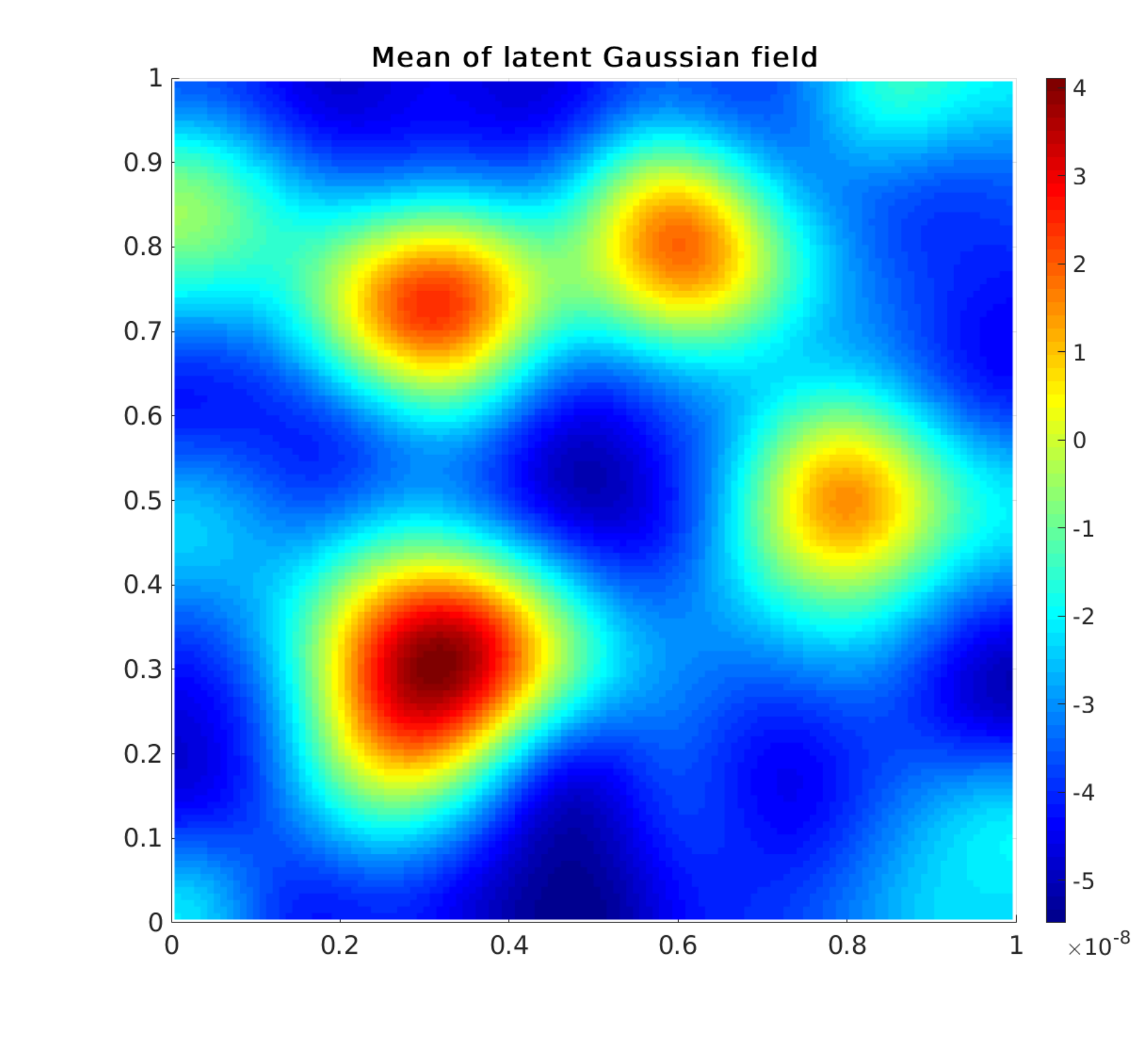}
\includegraphics[width=0.49\textwidth]{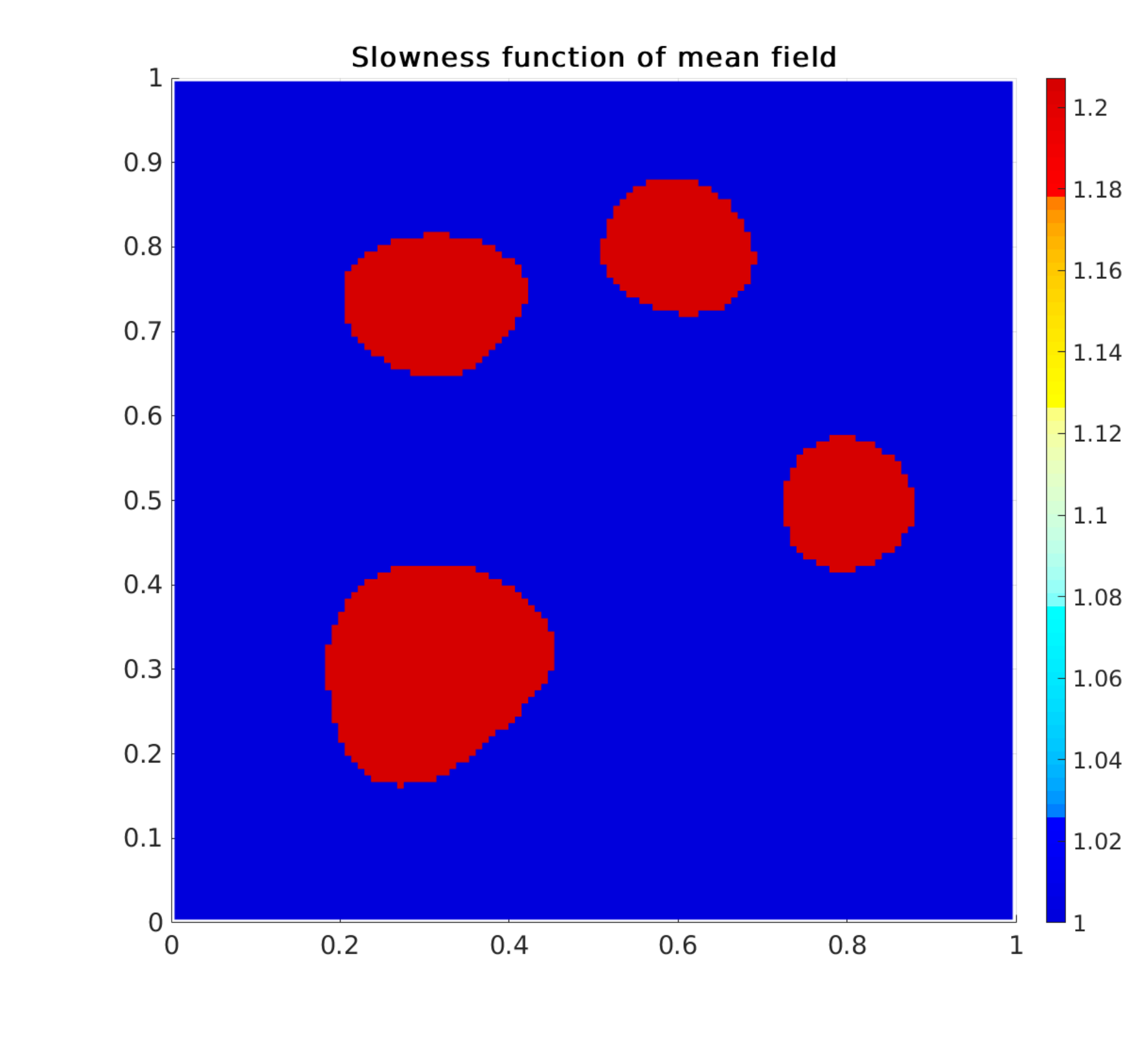}
\end{center}
\caption{Sample means for truth E. The left shows the Monte Carlo approximation of $\mathbb{E}^{\mu^y}(v)$, the underlying continuous field. The right shows the Monte Carlo approximation of $\mmm[S](\mathbb{E}^{\mu^y}(v))$, the thresholded and scaled field.}
\label{f:means_eik_circ}
\end{figure}

\begin{figure}
\begin{center}
\includegraphics[width=0.8\textwidth]{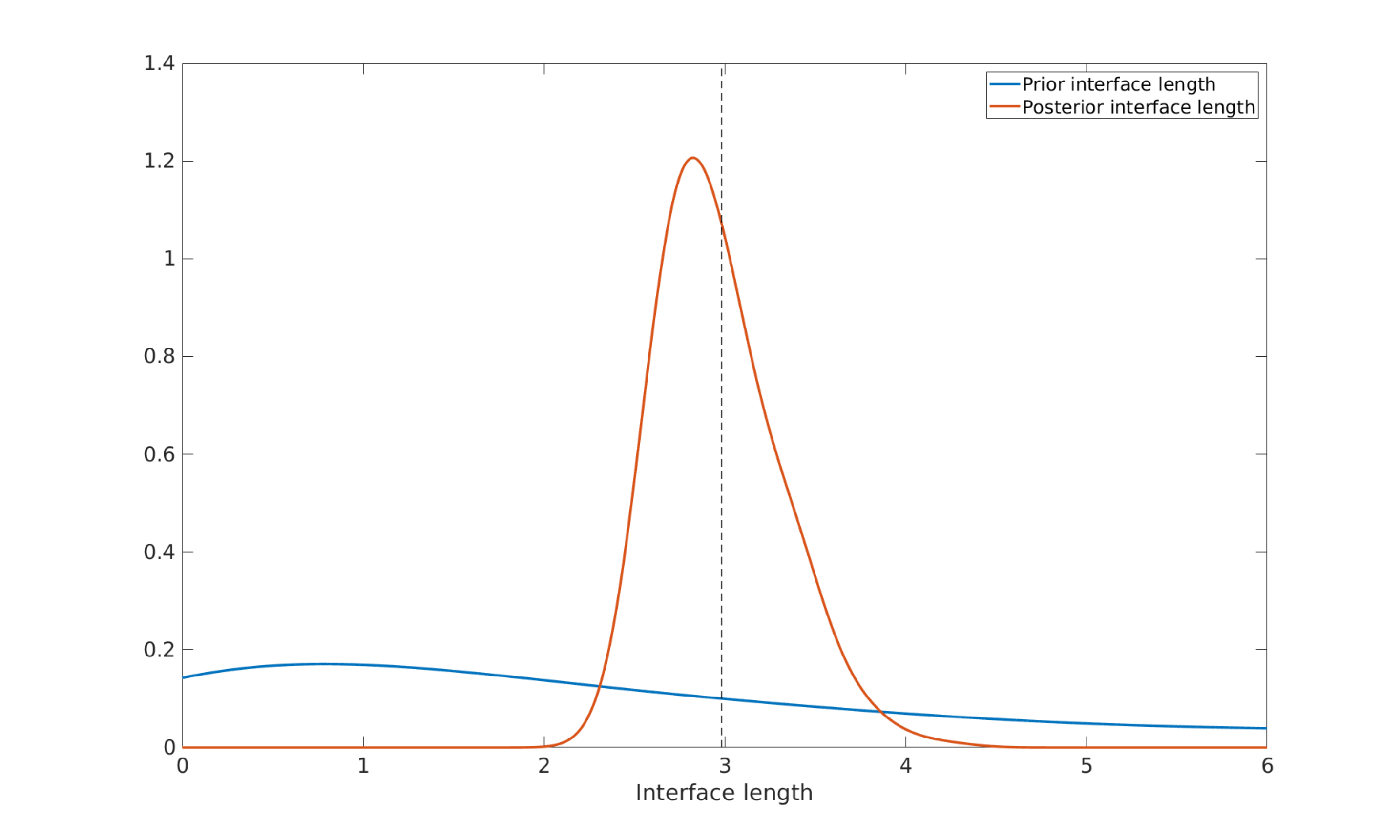}
\end{center}
\caption{The distribution of the perimeter under the prior and posterior distribution for truth E. 
The vertical dashed line indicates the perimeter of the true field.}
\label{f:interface_eik_circ}
\end{figure}

\begin{figure}
\begin{center}
\includegraphics[width=0.8\textwidth,]{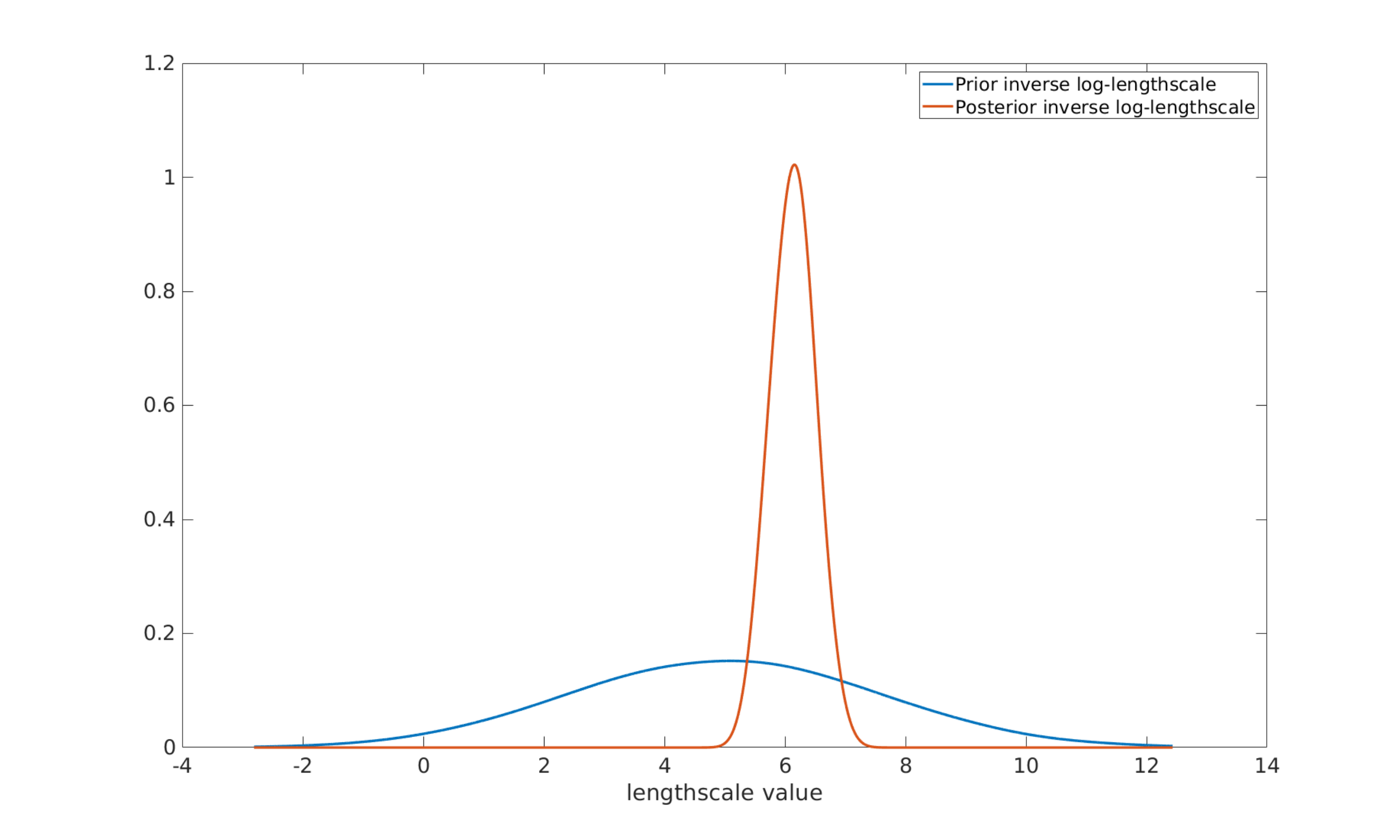}
\end{center}
\caption{The distribution of the logarithm of $\tau$ under the prior and posterior distribution for 
truth E.}
\label{f:loglen_eik_circ}
\end{figure}

\begin{figure}
\begin{center}
\includegraphics[width=0.8\textwidth,]{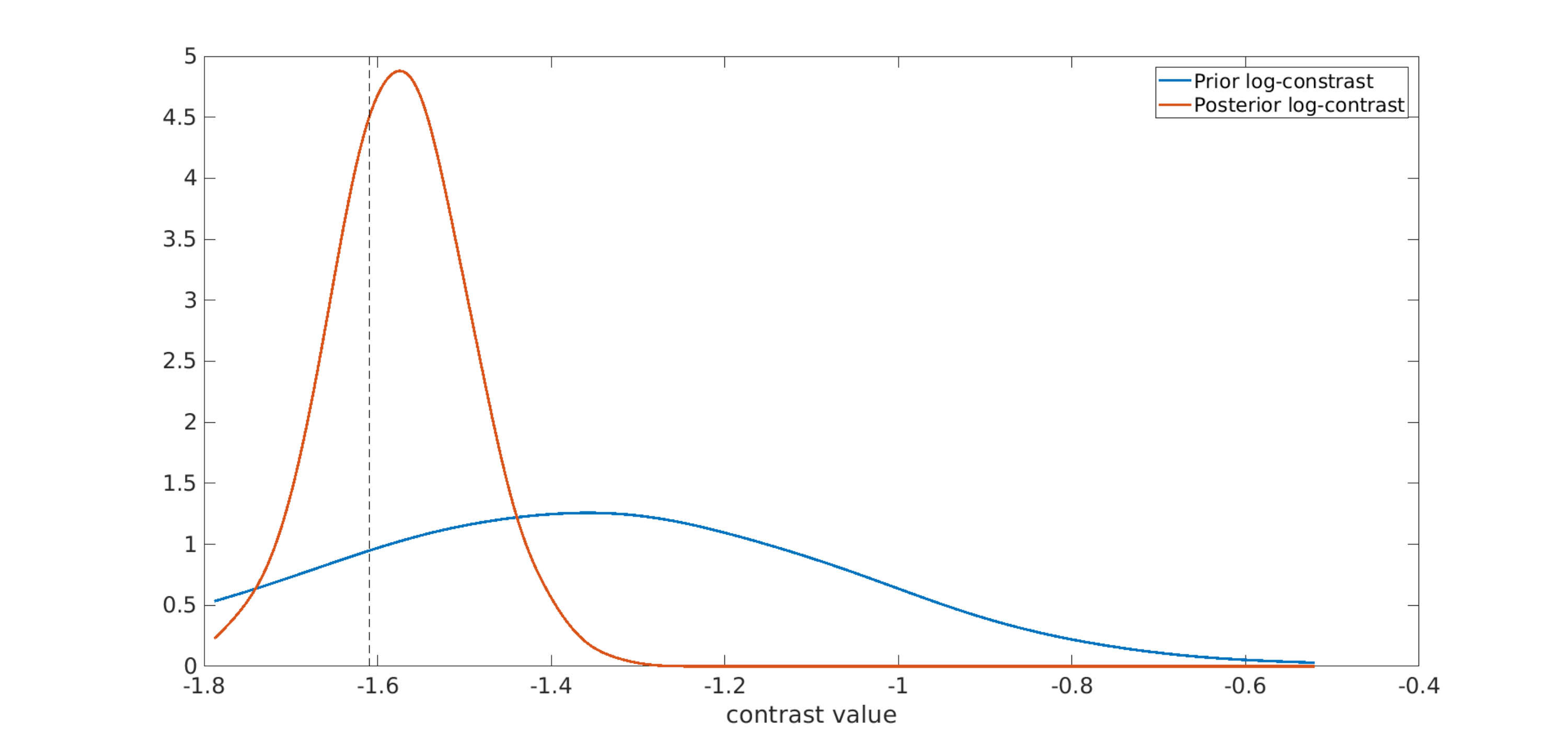}
\end{center}
\caption{The distribution of the logarithm of $\kappa$ under the prior and posterior distribution for truth E. 
The vertical dashed line indicates true value $\log\kappa=\log(0.2)$.}
\label{f:logconc_eik_circ}
\end{figure}

\section{Conclusions}\label{sec:C}

The paper investigates the reconciliation of perimeter and 
Bayesian regularization for the reconstruction of functions
with interfaces, from direct or indirect noisy measurements.
Three approaches are studied: Formulation 1 is based on Bayesian level method; Formulation 2 is based on Bayesian
phase-field regularization;
and Formulation 3 is based on Gaussian process regression and
classification.

By studying a class of linear inverse problems
we show that Formulation 2 exhibits perimeter regularization
in the context of its MAP estimator, but not at the level
of samples from the posterior distribution.
Formulation 1 exhibits perimeter regularization at the
level of individual samples from the posterior; there is no MAP estimator in this context. Both Formulations 
1 and 2 require careful choices of constants in construction of the prior, 
but Formulation 2 is far more constrained in this regard. Furthermore,
as a consequence of these constraints, Formulation 2 exhibits
a measure concentration phenomenon meaning that MCMC based algorithms 
using Formulation 1 are considerably faster than those based on Formulation 2. 
Formulation 3
is competitive with Formulation 1 in terms of both sample properties
and speed, but does not generalize beyond linear problems.
We study Formulation 1 for a nonlinear inverse problem, demonstrating
that it is effective in this context and, in addition, showing
how hierarchical methods may be used to learn model hyper-parameters
appearing in the prior.

The ideas in this paper can be combined in different ways: 
the methodology may be extended beyond binary-valued functions
to a variety of piecewise continuous problems; or
other limiting functionals could be contemplated, such as Mumford-Shah \cite{helin2011hierarchical}; 
and other smoothed thresholding functions could be contemplated within 
the level set method, such as the double-obstacle approximation 
to the signum function \cite{blowey1993curvature,blowey1994phase}. 
The success of the Bayesian level set method suggests that
further analysis of it, as well as its deployment in new application domains, 
would be very valuable.

\bibliographystyle{siamplain}
%\bibliography{barcode}

\appendix
\section{Proofs Of Main Results} 
\label{sec:A}

\noindent {\em Proof of \cref{prop:2}.}
Throughout this proof $C$ is a universal constant whose value may change between
occurrences. To apply Theorem 4.12 from \cite{DLSV13}, we need to show that the 
function $\Phi(\cdot,y)$ is bounded from below, is locally bounded from above and is locally Lipschitz.
We note that $\Phi(\cdot,y)$ is always non-negative so is bounded from below. 
If $\|u\|_X=\max_{x\in \bar\OOmega}|u(x)|\le \rho$ then we may bound $|\Phi(u,y)|$ by a constant
depending on $\rho$ i.e. $\Phi(\cdot,y)$ is locally bounded. For the local Lischiptz continuity, we have
\beqas
\Phi(u,y)-\Phi(v,y)=\frac{r}{4\ep^b}\int_\OOmega(2-u(x)^2-v(x)^2)(u(x)+v(x))(u(x)-v(x))\,\dee x+\\
\frac{1}{2\ep^{2c}}\langle\GGamma^{-\frac{1}{2}}(2y-Ku-Kv),\GGamma^{-\frac{1}{2}}K(v-u)\rangle
\eeqas
{Assume that $\|u\|_X\le \rho$ and $\|v\|_X\le \rho$. Then, since $K$ is a bounded
linear operator on $L^1(D),$} 
\beqas
|\Phi(u,y)-\Phi(v,y)|\le C\int_\OOmega|u(x)-v(x)|dx+C|K(v-u)|\\
\le C\|u-v\|_{L^1(\OOmega)} \le C|\OOmega|^{1/2}\|u-v\|_{L^2(\OOmega)} \le C\|u-v\|_X.
\eeqas
The desired result follows.
\eproof

\hspace{0.2in}

\noindent {\em Proof of \cref{thm:3}.}
We adapt the proof of Hilhorst et al. to allow for periodic boundary conditions and
the additional $L^2$ norm appearing in the functional to be infimized.  
From Hilhorst et al., we have that if $\ue\to u$ in $L^1(\OOmega)$ then 
\begin{eqnarray*}
\liminf_{\ep\to 0}I^\ep(\ue)&\ge& \liminf_{\ep\to 0}\int_D\left(\frac12{\delta\ep^3}|\triangle\ue|^2+\frac12\delta q\ep|\nabla\ue|^2
+\frac{r}{4\ep}\bigl(1-\ue(x)^2\bigr)^2\right)\dee x\\
&&+\frac{1}{2}|\GGamma^{-\frac{1}{2}}(y-K\ue)|^2\\
&\ge& I_0^\delta(u).
\end{eqnarray*}
Now we show that for each $u\in L^1(\OOmega)$, there is a sequence 
$\{\ue\}\subset H^2_\#(\OOmega)$ which converges strongly to $u$ in $L^1(\OOmega)$ such that 
$\limsup_{\ep\to 0} I^\ep(\ue)\le I_0^\delta(u)$. We first review the main points 
in the proof of Hilhorst et al. for functions $u\in H^2(\OOmega)$. Considering the 
case $I^\delta(u)<\infty$, without loss of generality, we assume that 
\[
u=\one_Q-\one_{\IR^d\setminus Q}
\]
where $Q$ is a bounded domain, with $\partial Q\in C^\infty$ and $Q\subset\subset \OOmega$.
% with
%\[
%\int_\OOmega u(x) dx=m.
%\]
The sign distance function is defined as
\[
d(x)=\left\{
\begin{array}{rl}
+\inf_{y\in\partial Q}|x-y|\ \mbox{if}\ x\in Q\\
-\inf_{y\in\partial Q}|x-y|\ \mbox{if}\ x\notin Q
\end{array}
\right.
\]
Let $N_h$ be an $h$ neighbourhood of $\partial Q$ (we choose $h$ so that $h$ is less than 
the distance between $\partial Q$ and $\partial \OOmega$.) We choose a function $\eta\in C^2(\bar\OOmega)$ 
such that $\eta(x)=d(x)$ for $x\in N_h$, $\eta(x)\ge h$ when $x\in Q\setminus N_h$ and $\eta(x)\le -h$ 
when $x\in\OOmega\setminus(Q\bigcup N_h)$. Let $U$ be an odd minimizer of the functional 
$e^\delta(U)$ with $\lim_{t\to\infty}U(t)=1$ and $\lim_{t\to-\infty}U(t)=-1$. We let
\[
\ue=U\left(\frac{\eta(x)}{\ep}\right).
\]

We note that $\ue(x)$ is uniformly bounded pointwise and $\ue(x)\to u(x)$ for all $x\in D$. 
From the Lebesgue dominated convergence theorem, $\ue\to u$ in $L^1(D)$ and in $L^2(D)$. Thus
\[
\lim_{\ep\to 0}|\GGamma^{-\frac{1}{2}}(y-K\ue)|^2=|\GGamma^{-\frac{1}{2}}(y-Ku)|^2.
\]
and, since $a>0$, 
\[
\lim_{\ep\to 0}\int_D\delta\tau^2\ep^a\ue(x)^2\,\dee x=0. 
\]
To show that $\lim_{\ep\to 0}I^\ep(u)=I^\delta_0(u)$, we follow the approach of Hilhorst et al.. 
The integral %in the definition of $I^\ep(\ue)$ 
\[
\int_D\left(\frac12{\delta\ep^3}|\triangle\ue|^2+\frac12\delta q\ep|\nabla\ue|^2
+\frac{r}{4\ep}\bigl(1-\ue(x)^2\bigr)^2\right)\dee x
\]
is written as 
\begin{eqnarray*}
\int_{D\setminus N_h}\left(\frac12{\delta\ep^3}|\triangle\ue|^2+\frac12\delta q\ep|\nabla\ue|^2
+\frac{r}{4\ep}\bigl(1-\ue(x)^2\bigr)^2\right)\dee x\\
+\int_{N_h}\left(\frac12{\delta\ep^3}|\triangle\ue|^2+\frac12\delta q\ep|\nabla\ue|^2
+\frac{r}{4\ep}\bigl(1-\ue(x)^2\bigr)^2\right)\dee x.
\end{eqnarray*}
%the sum of the integrals over $\OOmega\setminus N_h$ and over $N_h$. 
Using the exponential decay of $U, U'$ and $U''$ at $\infty$ and $-\infty$, we 
deduce that the integral over $\OOmega\setminus N_h$ goes to $0$ when $\ep\to 0$ 
(note that $|\eta(x)|/\ep>h/\ep$ which goes to $\infty$ when $\ep\to 0$ for $x\in\OOmega\setminus N_h$).  
The integral over $N_h$ is shown to converge to  $I^\delta(u)$ when $\ep\to 0$. 

To adapt this proof of Hilhorst et al. to functions with periodic boundary condition on $\OOmega$, we 
only need to choose the function $\eta$ so that $\eta$ is periodic and $\eta(x)\ge h$ 
when $x\in Q\setminus N_h$ and $\eta(x)\le -h$ when $x\in \OOmega\setminus (Q\bigcup N_h)$. 
Such a function can be constructed as follows. Let $\psi(x)\in C^\infty_0(\OOmega)$ be such 
that $\psi(x)=1$ when $x$ is in a neighbourhood of $Q\bigcup N_h$, and $0\le \psi(x)\le 1$ for 
all $x\in \OOmega$. Let $\eta_1(x)$ be a smooth periodic function with $\eta_1(x)\le -h$ 
for all $x\in\OOmega$. Using the function $\eta$ of Hilhorst et al., we define a new function 
\[
\bar\eta(x)=\psi(x)\eta(x)+(1-\psi(x))\eta_1(x).
\]  
The function $\bar\eta(x)$ satisfies the requirement.
\hfill$\Box$

\end{document}